\documentclass[oneside,reqno]{amsart}

\usepackage[T2A]{fontenc}
\usepackage{amsmath,amssymb,amsthm}
\usepackage[english]{babel}
\usepackage{mmap}
\usepackage[pdftex]{graphicx}
\usepackage{float}
\usepackage{xcolor}
\usepackage[unicode, pdftex]{hyperref}
\theoremstyle{definition}

\newtheorem{Definition}{Definition}

\newtheorem{theorem}{Theorem}[section]

\newtheorem{lemma}{Lemma}[section]

\theoremstyle{definition}
\newcommand{\Z}{\mathbb{Z}}
\newcommand{\R}{\mathbb{R}}

\newcommand{\Eis}{\mathit{Eis}}
\newcommand{\Prim}{\mathrm{Prim}}
\newcommand{\Vol}{\mathrm{Vol}}
\renewcommand{\L}{\textrm{\foreignlanguage{russian}{Л}}}

\makeatletter
\newcommand{\extp}{\@ifnextchar^\@extp{\@extp^{\,}}}
\def\@extp^#1{\mathop{\bigwedge\nolimits^{\!#1}}}
\makeatother

\title[Convex triangulations of $\R P^2$]{Equilateral convex triangulations of $\mathbb R P^2$ with three conical points of equal defect}

\date{\today}
\begin{document}
\begin{abstract}
Consider triangulations of $\mathbb R P^2$ whose all vertices have valency six except three vertices of valency $4$. In this chapter we prove that the number $f(n)$ of such triangulations with no more than $n$ triangles grows as $C\cdot n^2+ O(n^{3/2})$ where $C = \frac{1}{20} \sqrt{3} \cdot\L( \frac{\pi}{3} ) \zeta^{-1}(4) \zeta(\Eis, 2) \approx 0.2087432125056015...$, where $\L$ is the Lobachevsky function and $\zeta(\Eis,2) =\sum\limits_{(a,b)\in\mathbb Z^2\setminus 0}{\frac{1}{|a+b\omega^2|^4}}$, and $\omega^6=1$.
\end{abstract}

\maketitle
\centerline{Mikhail Chernavskikh\footnote{Lomonosov Moscow State University, 1 Leninskije gory, Moscow 119991, Russia, e-mail: mike.chernavskikh@gmail.com
ORCID: 0000-0001-9513-7891}, Altan Erdnigor\footnote{HSE University, Russian Federation, Department of Mathematics, 6 Usacheva st, Moscow 119048, Russia, E-mail address: alerdnigor@gmail.com
ORCID: 0000-0002-4617-5231}, Nikita Kalinin\footnote{Saint Petersburg State University, 7/9, Unversitetskaya emb., 199034, Saint Petersburg, Russia, email: nikaanspb@gmail.com, ORCID: 0000-0002-1613-5175}, Alexandr Zakharov\footnote{Saint Petersburg State University, 7/9, Unversitetskaya emb., 199034, Saint Petersburg, Russia, email: Zaa060998@gmail.com, ORCID: 0000-0002-0305-099X}}

AMS codes: 51M09, 57N45, 11P21, 11M36, 11E45.

\keywords{flat metric, equilateral triangulation, conical singularity, zeta function, Epstein zeta function, hyperbolic volume}

\section{Introduction}

Consider a triangulation\index{triangulation} $T$ of $\mathbb R P^2$ such that each vertex of $T$ is contained in at most six triangles. These triangulations are called {\it convex}\index{convex triangulation}. Let each triangle in $T$ be the equilateral triangle with sides of length one. This supplies $\mathbb R P^2$ with a flat metric outside of the vertices of $T$. If at a vertex $v$ of $T$ exactly $k$ triangles come together then we say that the defect at $v$ is equal to $(6-k)\pi/3$. Convex triangulations are exactly those with non-negative defects. By counting edges, vertices, and triangles in $T$ one can see that the sum of all defects of the vertices of $T$ is equal to $2\pi$, because the Euler characteristic of $\mathbb R P^2$ is one. Hence this construction gives a flat metric on $\mathbb R P^2$ except at most six points (vertices of valency less than six).

Consider the covering of $\mathbb R P^2$ by $S^2$. Naturally we obtain a metric on $S^2$ which is flat everywhere except at most twelve points with so-called conical singularities\index{conical singularity}. A conical singularity with defect $\theta, 0\leq \theta< 2\pi$ is locally modeled on the sector $0\leq \phi \leq 2\pi-\theta$ of the unit disk $(r,\phi)$ with identified boundaries $(r,0)\sim(r,2\pi-\theta)$. 

By Alexandrov's theorem \cite{1942} each flat metric on $S^2$ with conical singularities can be realized as the surface of a (possible degenerate) convex polytope in $\mathbb R^3$ with intrinsic metric. If we have only two conical points (with defects $\theta$ and $2\pi-\theta$) on an everywhere else flat $\mathbb R P^2$, then its covering $S^2$ has four conical points, and they should be identified by the central symmetry. Thus this metric is realized as a two-sided planar parallelogram (a degenerate polytope) with angles $\theta/2, \pi-\theta/2$. If we consider $\mathbb R P^2$ with three conical points, then its covering $S^2$ is isometric to a centrally-symmetric octahedron.

Thurston \cite{thurston1998shapes} (see also the lecture notes \cite{schwartz2015notes} which contain more detailed proofs) studied convex triangulations of $S^2$ and the moduli space of flat metrics on $S^2$ with a finite number of arbitrary conical singularities; the set of convex equilateral triangulations lives as a discrete subset in this moduli space. Following Thurston's ideas, we study equilateral triangulations of $\mathbb R P^2$ whose vertices have all valency six except three vertices of valency four. 

\section{Triangulations of $\R P^2$ with three marked points with defects $2\pi/3$}
A graph without loops and multiple edges, drawn on $\R P^2$, is called {\it a triangulation} of $\R P^2$ if each face of this graph has three edges. Note that two faces of such a triangulation can intersect in zero, one, two, or three vertices.

Consider a triangulation $T$ of $\R P^2$ such that only three vertices $A,B,C$ have valency four, and all the other vertices have valency six.  

$T$ gives a flat metric $\mu_{\R P^2}$ on $\R P^2$ except at $A,B,C$. Passing to the universal covering sphere $S^2$ one gets a flat metric $\mu_{S^2}$ on $S^2$ except six points. By Alexandrov's theorem, $\mu_{S^2}$ is realised as the intrinsic metric of the surface of a certain centrally symmetric octahedron $F$. The projections of the edges of $F$ give six geodesic paths between $A,B,C$ in $\R P^2$, thus cutting $\R P^2$ into four triangles (all with vertices $A,B,C$, so we have four triangles $ABC$). Choose one of these four triangles, call it $\Delta$. Call $A,B,C, \Delta$ the {\it label} of $T$.

Denote by $T_{\R P^2}$ the set $\{T,A,B,C,\Delta\}$ of labelled triangulations of $\R P^2$. Two such triangulations $(T_1,A_1,B_1,C_1,\Delta_1),(T_2,A_2,B_2,C_2,\Delta_2)$ are said {\it isometric} if there exists a map between triangulations $T_1,T_2$, which sends vertices and edges of $T_1$ to vertices and edges of $T_2$, $A_1$ to $A_2$, $B_1$ to $B_2$, $C_1$ to $C_2$ and $\Delta_1$ to $\Delta_2$.

Consider the smallest possible triangulation of $\R P^2$ which consists of three vertices, four triangles, and six edges. We can label it in $3\cdot 2\cdot 1 \cdot 4$ different ways, but all the obtained labelled triangulations are isometric.

Let $f(n)$ be the cardinality of the set of isometry classes of labelled triangulations in $\R P^2$ with no more than $n$ triangles.

Consider a labelled triangulation $\{T,A,B,C,\Delta\}$ of $\R P^2$. Consider the octahedron $F$ as above. Then $\Delta$ lifts to $F$ as two triangles $\Delta_1,\Delta_2$. Call $A,B,C$ the vertices of $\Delta_1$ and $A',B',C'$ the vertices of $\Delta_2$, then $A,A'\in F$ are projected to $A\in \R P^2$, $B,B'\in F$ are projected to $B\in \R P^2$, $C,C'\in F$ are projected to $C\in \R P^2$ under the covering map $F\to \R P^2$. Who is $\Delta_1$ and who is $\Delta_2$ is uniquely defined by the condition that the order of vertices $A,B,C$ is counterclockwise (looking from outside of $F\subset \R^3$, see Figure~\ref{fig_1}). 

We can reverse the procedure. Consider a convex triangulations $\tilde T$ of $S^2$ with six points with defects $2\pi/3$. Mark three of these points as $A,B,C$ and suppose that by supplying $S^2$ with a flat metric as above and realising it as the surface of a polyhedron we obtain a centrally symmetric octahedron $F$, and $ABC$ is a face of $F$, and its orientation gives the counterclockwise order of $ABC$ (Figure~\ref{fig_1}). The central symmetry of $F$ preserves $\tilde T$ and provides us with a projection $p:F\to \R P^2$. Projecting $\tilde T$ to a triangulation $T$ of $\R P^2$ we mark the images of $A,B,C\in F$ as $A,B,C\in \R P^2$. Label by $\Delta$ the image of the face $ABC$ of $F$ under $p$. 

Consider a centrally symmetric octahedron\index{octahedron} $F\subset \R^3$, such that the sum of angles at each vertex of $F$ is $4\pi/3$. Suppose that $\tilde T$ is a convex  equilateral triangulation of $F$. Choose any face of $F$ and call its  vertices $A,B,C$ in such a way that the order of $A,B,C$ is counterclockwise (if looking from outside of $F\subset \R^3$, see Figure~\ref{fig_1}) and call the opposite faces $A',B',C'$. $(A,B,C)$ is a label of $\tilde T$.  We consider labeled triangulations $(\tilde T,A,B,C)$ up to isometry.

We proved the following lemma

\begin{lemma} There exist a bijection between labelled triangulations $(T,A,B,C,\Delta)$ of $\R P^2$ with $n$ triangles and labelled triangulations $(\tilde T,A,B,C)$ with $2n$ triangles.
\end{lemma}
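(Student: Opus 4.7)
The plan is to construct maps in both directions and verify they are mutually inverse; the text just before the lemma already sketches both maps, so the task is mainly to check well-definedness on isometry classes and to match triangle counts.

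For the forward map, I would take $(T,A,B,C,\Delta)$ with $n$ triangles and lift via the double cover $p: S^2\to \R P^2$. The preimage $\tilde T = p^{-1}(T)$ is a triangulation of $S^2$ with $2n$ triangles, six vertices of valency $4$ (the two preimages of each of $A,B,C$), and all other vertices of valency $6$. The pulled-back flat metric has six conical points of defect $2\pi/3$, and by Alexandrov's theorem it is realized as the surface of a convex polyhedron $F\subset\R^3$. Since the deck transformation is a fixed-point-free isometric involution of this metric, Alexandrov's rigidity forces it to extend to an isometry of $F$; a fixed-point-free isometric involution of a convex polyhedron in $\R^3$ is the central symmetry, and a centrally symmetric polyhedron with six vertices is an octahedron. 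The face $\Delta$ lifts to two opposite faces of $F$, and the counterclockwise-from-outside convention picks exactly one of them, whose vertices provide the labels $A,B,C$ of $\tilde T$.

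For the reverse map, I would start with $(\tilde T,A,B,C)$ on a centrally symmetric octahedron $F$ and quotient by the central symmetry $\sigma$. Since $\sigma$ is a free isometric involution preserving $\tilde T$, the quotient $F/\sigma$ is canonically identified (metrically) with $\R P^2$ equipped with a triangulation $T$ that has exactly $n$ triangles. Marking $\Delta := p(ABC)$ and projecting the labels $A,B,C$ yields a labelled triangulation of $\R P^2$ in the sense of the paper; the orientation condition on $(A,B,C)$ in $F$ selects the specific lift $\Delta_1$ that the forward map would produce, so the choice of face $\Delta$ in $\R P^2$ is canonical.

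To conclude I would check that the two constructions are inverse on isometry classes: lifting then projecting reproduces $(T,A,B,C,\Delta)$ by functoriality of the covering, and projecting then lifting reproduces $(\tilde T,A,B,C)$ because Alexandrov's theorem recovers $F$ uniquely (up to isometry in $\R^3$) from $\mu_{S^2}$ and the orientation convention removes the only remaining ambiguity, namely which of $\Delta_1,\Delta_2$ to choose. Both maps respect labelled isometries in the obvious way.

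The main obstacle is the step where the covering involution must be promoted to the central symmetry of $F$; this requires Alexandrov's rigidity together with the observation that the only free isometric involution of a convex polyhedron in $\R^3$ is central symmetry, and that a centrally symmetric convex polyhedron with six vertices whose angle sums are all $4\pi/3$ is an octahedron. Once this rigidity step is established, the bijection and the triangle-count relation $n\leftrightarrow 2n$ follow immediately from the $2$-to-$1$ nature of the covering.
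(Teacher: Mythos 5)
Your argument follows the paper's own construction: both directions of the bijection are exactly the lift/quotient through the double cover $S^2\to\R P^2$, combined with Alexandrov's theorem and the counterclockwise convention that selects one of the two lifts $\Delta_1,\Delta_2$ of $\Delta$, and the count $n\leftrightarrow 2n$ is immediate from the covering being $2$-to-$1$. The only substantive addition is your explicit justification that the deck involution is realized as the central symmetry of $F$ (Alexandrov rigidity plus the fact that the only fixed-point-free involution in $O(3)$ is $-I$), a point the paper asserts without proof; this is correct and fills a genuine, if minor, gap.
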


Therefore $f(n) = \# \{(\tilde T,A,B,C)$  with no more than $2n$ triangles$\}$.

\section{Moduli space of flat metrics on $S^2$ with six pair-wise centrally symmetric  conical points of  equal defect}

Consider the set of all centrally symmetric octahedra $F$, such that the sum of angles at each vertex of $F$ is $4\pi/3$. There exist natural coordinates on this space as follows \cite{wang2021shapes}. 

Recall that for a triangle $ABC$ whose angles are all less than $2\pi/3$ the Fermat--Torricelli point is the unique point $X$ inside the triangle such that all the angles $AXB,BXC,CXA$ are equal to $2\pi/3$. If the angle $ABC$ is equal to $2\pi/3$ then we say that $B$ is the Fermat--Torricelli point of the triangle $ABC$. The Fermat--Torricelli point $X$ is the point minimizing $|XA|+|XB|+|XC|$.

Pick the Fermat--Torricelli point in each face of $F$ and connect it with the vertices of this face. Then, among the lengths of these 24 intervals there are only four different ones  \cite{wang2021shapes}, let us denote them by $a,b,c,d$.  

\begin{figure}[h]
\includegraphics[scale=0.3]{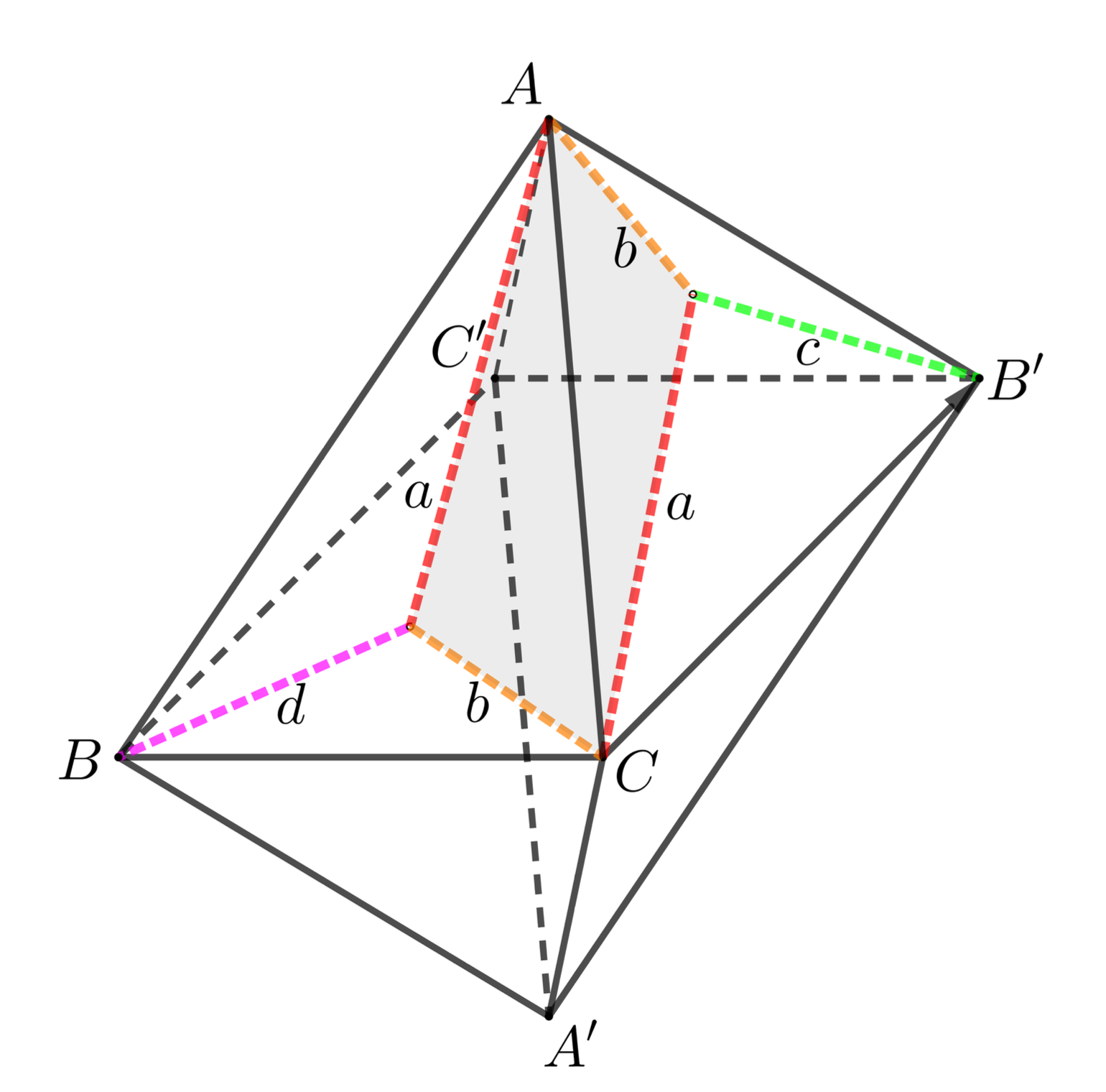}
\caption{An octahedron with vertices $AA'BB'CC'$, two Fermat--Torricelli points in the faces $ABC, ACB'$, and the corresponding parallelogram with sides $a,b$. }
\label{fig_1}
\end{figure}

Conversely, given four non-negative numbers $a,b,c,d$ (we allow at most one of them to be zero, see the examples below), we can construct 12 parallelograms with acute angle $\pi/3$ and sides $(a,b),(a,c),(a,d),(b,c),(b,d),(c,d)$ (two copies of each parallelogram). Let us bend each of them along its diagonal and glue them in an octahedron $F$. The diagonals of the parallelograms become edges of $F$. If $a=0$ then the angle $BAC$ is $2\pi/3$ and the parallelogram in Figure~\ref{fig_1} degenerates to the edge $AC$ of $F$.

Let us say that the counterclockwise order around $A$ of the intervals from $A$ to the Torricelli points of the adjacent faces gives $a,b,c,d$, and let us also fix that the interval of length $a$ belongs to the triangle $ABC$, see Figure~\ref{fig_1}. Note also that there are six rotational orderings of $a,b,c,d$ and that all of them are realized at exactly six vertices of $F$.

Given these coordinates on the moduli space of such octahedra (see details in \cite{wang2021shapes} for octahedra with general defects at the vertices), we easily compute the area of the octahedron $F=(a,b,c,d)$ (note that $2\sin{\frac{\pi}{3}} =\sqrt{3}$): it is $$Area(a,b,c,d)=\sqrt{3}(ab+ac+ad+bc+bd+cd).$$

Let $Q(a,b,c,d)=ab + ac + ad + bc + bd + cd; Q$ is a quadratic form of signature $(1,3)$ since
$$Q(a,b,c,d)=\frac{1}{8}(3(a+b+c+d)^2-(c+d-a-b)^2-2(c-d)^2-2(a-b)^2).$$

If we start with an equilateral triangulation of $\R P^2$ with $n$ triangles, then its covering sphere has $2n$ triangles, and each triangle has area $\sqrt 3/4$, so the area of the sphere is $$n\sqrt 3/ 2 = \sqrt{3}Q(a,b,c,d))$$ which gives $n=2Q(a,b,c,d)$. 

Define 
$$X=\left\{ 
		(a, b, c, d)\in \R^4_{\geq0} \mid
		 Q(a,b,c,d)\le 1 
	\right\}.$$
	
\begin{lemma}
\label{lem_vol}
\[
	\Vol (X) = \sqrt 3 \L ( \frac{\pi}{3}),
\]
where $\L(\phi) = - \int_0^\phi \ln |2 \sin \theta| d \theta$ is the Lobachevsky function\index{Lobachevsky function}.
\end{lemma}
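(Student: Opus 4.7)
The plan is to diagonalise $Q$ into the standard Minkowski form and identify $X$ with a Euclidean cone over a region of $\mathbb{H}^3$. Using the identity for $8Q$ displayed just above the statement, set
\[
u_0 = \tfrac{\sqrt 3}{2\sqrt 2}(a+b+c+d),\quad
u_1 = \tfrac{1}{2\sqrt 2}(c+d-a-b),\quad
u_2 = \tfrac{1}{2}(c-d),\quad
u_3 = \tfrac{1}{2}(a-b),
\]
so that $Q(a,b,c,d) = u_0^2 - u_1^2 - u_2^2 - u_3^2$ is the Minkowski form on $\R^{1,3}$. A direct determinant computation yields $|\det(\partial u/\partial(a,b,c,d))| = \sqrt 3/4$, whence $\Vol(X) = \tfrac{4}{\sqrt 3}\,\Vol(L(X))$, where $L$ denotes this linear change of variables.

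Next I would switch to hyperbolic polar coordinates on the future light cone. Since $u_0 \geq 0$ and $Q \geq 0$ on $\R^4_{\geq 0}$, the image $L(\R^4_{\geq 0})$ lies in the closed future cone. Writing $u = r p$ with $r \geq 0$ and $p$ on the hyperboloid $\{u_0^2 - u_1^2 - u_2^2 - u_3^2 = 1,\ u_0 > 0\}$, i.e.\ in the hyperboloid model of $\mathbb{H}^3$, the Lebesgue measure factorises as $r^3\,dr\,d\mu_{\mathbb{H}^3}(p)$, and the condition $Q(u) \leq 1$ becomes $r \leq 1$. The radial integration $\int_0^1 r^3\,dr = \tfrac14$ therefore gives $\Vol(L(X)) = \tfrac14\,\Vol_{\mathbb{H}^3}(D)$, where $D \subset \mathbb{H}^3$ is the intersection of $L(\R^4_{\geq 0})$ with the hyperboloid.

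The decisive step is identifying $D$ as the regular ideal tetrahedron in $\mathbb{H}^3$. The four coordinate axes of $\R^4$ are null for $Q$, so they descend to four ideal vertices of $D$; the four coordinate hyperplanes $\{a=0\},\ldots,\{d=0\}$ give four totally geodesic faces. Hence $D$ is an ideal tetrahedron. The symmetric group $S_4$ acts on $(a,b,c,d)$ by permutations preserving $Q$, so it acts on $D$ by hyperbolic isometries realising every permutation of the ideal vertices. An ideal tetrahedron whose cross-ratio modulus is $S_4$-invariant must be the regular one (with modulus $e^{i\pi/3}$). By the Lobachevsky--Milnor formula, the regular ideal tetrahedron has hyperbolic volume $3\,\L(\pi/3)$, and combining everything,
\[
\Vol(X) = \tfrac{4}{\sqrt 3}\cdot \tfrac14 \cdot 3\,\L\!\left(\tfrac{\pi}{3}\right) = \sqrt 3\,\L\!\left(\tfrac{\pi}{3}\right).
\]

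The main potential obstacle is the symmetry argument concluding that $D$ is \emph{regular}; as a backup one could compute a single dihedral angle of $D$, say along the geodesic $\{a=b=0\}\cap \mathbb{H}^3$, directly from the restriction of the Minkowski form to the 2-planes meeting at that edge, and verify it equals $\pi/3$.
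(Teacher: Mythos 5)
Your proof is correct and follows essentially the same route as the paper: both decompose the cone $X$ radially over its hyperbolic cross-section and reduce the computation to the volume $3\L(\pi/3)$ of the regular ideal tetrahedron (your Jacobian $\sqrt3/4$ is exactly the paper's $\mathrm{Vol}_{\bar Q}(e_1,e_2,e_3,e_4)=\sqrt{|\det\bar Q|}$, and your $\int_0^1 r^3\,dr=\tfrac14$ matches their $\int_0^1 q\,dq=\tfrac12$ after $q=r^2$). The only real difference is that you prove the identification of the cross-section with the regular ideal tetrahedron via the $S_4$-symmetry argument, whereas the paper outsources that step to \cite{wang2021shapes}.
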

\begin{proof}

Recall that each bilinear symmetric form $( \cdot , \cdot )$ yields a volume form on $\mathbb R^4$.
Namely, $\mathrm{Vol}_{( \cdot , \cdot )  }(v_1, v_2, v_3, v_4) = \pm \sqrt{|\det (v_i, v_j)|}$, the square root of the Gramian of $( \cdot  , \cdot )$ with respect to this system of vectors. 
The sign of the (oriented) volume is defined by the orientation of $(v_1, v_2, v_3, v_4)$.

Denote by $\bar Q$ the bilinear symmetric form associated with $Q$.
Define a $3$-form $\alpha$ on $\R^4$ as follows:
\[\forall x\in \R^4, \alpha: \extp^3 T_x\R^4\to \R,  \alpha(v_1, v_2, v_3) = 
\mathrm{Vol}_{\bar Q}(x, v_1, v_2, v_3) 
.\]

Note that $Q$ induces a hyperbolic structure in the set $Q(v)=1$ and that $\alpha$ is the corresponding volume form. 
Next (see \cite{wang2021shapes} for details),
$$\int\limits_{v\in \R_{\geq 0}^4, Q(v) = 1}\alpha=3 \L(\frac{\pi}{3}).$$


Let $dQ$ be the differential of $Q$, namely

$$dQ:T_x\R^4\to \R, dQ(w) = 2 \bar Q (x, w).$$

Let $v=(a,b,c,d)$, consider $Q'(v)=Q'(a, b, c, d) = a^2 + b^2 + c^2 + d^2$.
Let $\omega$ be the standard Euclidian volume form $\omega(v_1, v_2, v_3, v_4) = \mathrm{Vol}_{\bar Q'}(v_1, v_2, v_3, v_4)$.

Let us prove that
\begin{equation}
\label{eq_dq}
	Q^{-1} dQ \wedge \alpha = 
	\frac{\sqrt 3}{2} \omega.
\end{equation}
Denote the coordinate basis in $\mathbb R^4$ by $(e_1, e_2, e_3, e_4)$. 
Take any $x, v_1, v_2, v_3 \in \mathbb R^4$, and denote by $A \in \mathrm{Mat}_{4 \times 4}(\mathbb R)$ the matrix of their coordinates.

On $T_x\R^4$ we have
\[
	(Q^{-1} dQ \wedge \alpha)(x, v_1, v_2, v_3) = 
	Q(v)^{-1} 2 \bar Q(x, x) \mathrm{Vol}_Q(x, v_1, v_2, v_3) = \]
	
	\[=2 \det A \mathrm{Vol}_Q(e_1, e_2, e_3, e_4) = 
	\frac{\sqrt 3}{2} \omega(x, v_1, v_2, v_3) \]

which proves \eqref{eq_dq}.

Now,
$$
\mathrm{Vol} (X) = \int\limits_X \omega =  
\frac{2}{\sqrt 3} \int\limits_X Q^{-1} dQ \wedge \alpha =$$

$$=\frac{2}{\sqrt 3} \int\limits_{0}^{1} q^{-1}dq
\int\limits_{a, b, c, d \ge 0, Q(a, b, c, d) = q} \alpha = 
\frac{2}{\sqrt 3} \int\limits_{0}^{1} q^{-1}(dq)~ q^2 
\int\limits_{a, b, c, d \ge 0, Q(a, b, c, d) = 1}\alpha = $$
$$=\frac{2}{\sqrt 3} \int\limits_{0}^{1} q dq \cdot
3 \L(\frac{\pi}{3}) = 
\sqrt 3
\L(\frac{\pi}{3}) 
.$$
 \end{proof}

 Denote 
\[
	g(n) = \# \left\{ (a, b, c, d)\in \Z^4_{>0} \mid Q(a,b,c,d) \le n \right\} 
.\] 

\begin{theorem}
\label{th_3}
\[
	g(n) = \sqrt{3} \L( \frac{\pi}{3} ) n^2 + O(n^{3/2})
,\] 	
where $$\sqrt{3} \L( \frac{\pi}{3} )
\approx 0.58597680967236472265039057221806926727385075240896...$$

\end{theorem}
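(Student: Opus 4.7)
The overall plan is to recognize $g(n)$ as a count of lattice points in the dilated region $\sqrt{n}\,X$ (which is legitimate because $Q$ is homogeneous of degree $2$), compare it to $\Vol(\sqrt{n}\,X) = n^2 \Vol(X) = n^2 \sqrt{3}\,\L(\pi/3)$ provided by Lemma~\ref{lem_vol}, and bound the discrepancy by a standard estimate on the boundary shell.

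For the comparison I would use a unit-cube tiling. To each $(a,b,c,d) \in \Z_{>0}^4$ attach the cube $C_{abcd} = [a-1,a] \times [b-1,b] \times [c-1,c] \times [d-1,d]$; these cubes tile $\R_{\geq 0}^4$ without overlap. Since each partial derivative of $Q$ is a sum of nonnegative coordinates, $Q$ is monotone nondecreasing in each variable on $\R_{\geq 0}^4$, so its maximum on $C_{abcd}$ is attained at $(a,b,c,d)$. Hence $C_{abcd} \subseteq \sqrt{n}\,X$ if and only if $Q(a,b,c,d) \leq n$, which immediately gives $g(n) \leq \Vol(\sqrt{n}\,X)$. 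Conversely, any point of $\sqrt{n}\,X$ either lies in such an ``inner'' cube or in a cube crossed by the level hypersurface $\Sigma_n = \{Q = n\} \cap \R_{\geq 0}^4$, so
\[
0 \leq \Vol(\sqrt{n}\,X) - g(n) \leq \#\{\text{cubes met by } \Sigma_n\},
\]
and the right-hand side is bounded up to an absolute constant by the $3$-dimensional area $\mathcal A(\Sigma_n)$.

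By homogeneity $\Sigma_n = \sqrt{n}\,\Sigma_1$, so $\mathcal A(\Sigma_n) = n^{3/2} \mathcal A(\Sigma_1)$, and what remains is to verify $\mathcal A(\Sigma_1) < \infty$. This is the main obstacle of the proof, because $\Sigma_1$ is \emph{non-compact} in the closed positive orthant: it extends to infinity along each coordinate axis, for instance as $a \to \infty$ with $b, c, d \to 0$ and $b+c+d \sim 1/a$. The plan is to parametrize $\Sigma_1$ near each such end by three of the coordinates, say $(a,b,c)$ with $d$ determined by $Q = 1$, and to check directly that the pullback of the area form is integrable: the area element stays uniformly bounded while the domain of integration in $(b,c)$ at fixed large $a$ has Lebesgue area $O(1/a^2)$. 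Thus each of the four ends of $\Sigma_1$ contributes a finite amount, and together with the bounded bulk of $\Sigma_1$ this yields $\mathcal A(\Sigma_1) < \infty$. Combined with the preceding steps, this gives $g(n) = \sqrt{3}\,\L(\pi/3)\, n^2 + O(n^{3/2})$, which is Theorem~\ref{th_3}.
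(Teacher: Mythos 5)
Your reduction of $g(n)$ to $\Vol(\sqrt n\,X)$ plus a boundary term is the right general idea (the paper does the same comparison, though it controls the boundary of the shell $Y_t=\{1\le Q\le t\}$ by a doubling relation $Y_{4t}=Y_4\cup 2Y_t$ rather than by the direct homogeneity $\Sigma_n=\sqrt n\,\Sigma_1$ that you use). However, your error estimate has a genuine hole at the step
\[
\Vol(\sqrt n\,X)-g(n)\ \le\ \#\{\text{cubes met by }\Sigma_n\}\ \lesssim\ \mathcal A(\Sigma_n).
\]
The number of unit cubes met by $\Sigma_n$ is \emph{infinite}: for every integer $a>n+1$ the point $\bigl(a-\tfrac12,\ \tfrac{n}{a-1/2},\ 0,\ 0\bigr)$ lies on $\Sigma_n$ and in the cube $[a-1,a]\times[0,1]^3$, so $\Sigma_n$ crosses all of these cubes along each of the four coordinate axes. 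Thus the middle quantity is $+\infty$ while $\mathcal A(\Sigma_n)$ is finite, and the comparison ``number of cubes met is $O(\text{area})$'' --- which relies on convexity or bounded geometry --- simply fails for this non-compact, non-convex level set. You correctly flagged non-compactness as the main obstacle, but you addressed it only for the finiteness of $\mathcal A(\Sigma_1)$ (that part of your sketch is fine), not for the cube count, which is where it actually bites.

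The repair is to bound $\Vol(\sqrt n\,X)-g(n)=\sum_{C\ \mathrm{crossed}}\Vol(C\cap\sqrt n\,X)$ by the actual intersection volumes instead of by $1$ per cube. In the bulk, where all coordinates are $O(\sqrt n)$ and $\Sigma_n$ is at unit scale a Lipschitz graph over a coordinate $3$-plane, the number of crossed cubes is indeed $O(\mathcal A(\Sigma_n))=O(n^{3/2})$; along the tentacles, the cube $[a-1,a]\times[0,1]^3$ meets $\sqrt n\,X$ in a set of volume $O((n/a)^3)$, and $\sum_{a>n}(n/a)^3=O(n)$, which is harmless. (Restricting first to $\{Q\ge 1\}$, as the paper does --- legitimate since $Q\ge 6$ on $\Z_{>0}^4$ --- does not by itself remove the tentacles, so some version of this cube-by-cube volume estimate is needed in any case.) With that replacement your argument goes through and gives the stated $O(n^{3/2})$ error term.
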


\begin{proof}
Define 
	$$Y_t=\left\{ 
		(a, b, c, d)\in \R^4_{\geq0} \mid
		1\le Q(a,b,c,d) \le t 
	\right\}.$$
	Note that $g(n) = |Y_n\cap \mathbb Z_{>0}^4|$. It follows from Lemma~\ref{lem_vol} that 
$$
	g(n) \approx \Vol(Y_n) \approx  \sqrt{3} \L( \frac{\pi}{3} )n^2. 
$$

Note that the error term is proportional to the Euclidean three-dimensional volume of the boundary of $Y_n$ since the three-dimensional volume of the boundary of $X$ is finite (one can use a similar reasoning as in Lemma~\ref{lem_vol}). 

For $t\geq 1,$ denote the three-dimensional volume of the boundary of $Y_t$ by $r(t)$. Denote by $2Y_t$ the image of $Y_t$ under the homothety with center at $0$ and coefficient $2$. Then the three-dimensional volume of the boundary of $2Y_t$ is $8r(t)$. On the other hand $Y_{4t} = Y_4\cup 2Y_t$ hence $r(4t)\leq r(4)+ 8r(t)$, thus $$r(4t)+\frac{1}{7}r(4)\leq 8\big[r(t)+\frac{1}{7}r(4)\big].$$ Letting $b(t)=r(t)+\frac{1}{7}r(4)$ we obtain $b(4t)\leq 8b(t)$ and this  leads to the estimate $b(4^kt)\leq 8^kb(t)$. Let $n=4^kx, 1\leq x<4$. Note that $8^k\leq n^{3/2}$. Then $b(n)\leq 8^kb(x)$. Let $c=\max_{1\leq x\leq 4} b(x)$. Thus we obtain $b(n)\leq cn^{3/2}$. This can be rewritten as $r(n)+\frac{1}{7}r(4)\leq  c n^{3/2}$ and so the volume of the boundary of $Y_n$ is $O(n^{3/2})$.

\end{proof}

Let us also introduce 
\[
h(n) = \# \left\{ (a, b, c, d) \in \Z^4_{>0} \mid a \equiv b \equiv c \equiv d \pmod 3,  Q(a,b,c,d) \le n \right\} 
.\] 
The covolume (in $\Z^4$) of the lattice generated by such quadruples is $27$, so, repeating the arguments of our proof of Theorem~\ref{th_3} we obtain
\begin{theorem}
\[
	h(n) = \frac{\sqrt{3}}{27}\L( \frac{\pi}{3} ) n^2 + O(n^{3/2})
.\] 
\end{theorem}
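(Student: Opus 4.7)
The plan is to mimic the proof of Theorem~\ref{th_3} essentially verbatim, just replacing the lattice $\Z^4$ with the sublattice $\Lambda \subset \Z^4$ defined by the congruence conditions $a \equiv b \equiv c \equiv d \pmod 3$.

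First I would verify that $\Lambda$ has index $27$ in $\Z^4$. The condition picks out the quadruples whose coordinates all share a common residue modulo $3$; equivalently, $a-b \equiv a-c \equiv a-d \equiv 0 \pmod 3$, which are three independent linear conditions on $(\Z/3\Z)^4$. Hence $[\Z^4 : \Lambda] = 3^3 = 27$, and the Euclidean covolume of $\Lambda$ in $\R^4$ equals $27$. Equivalently, $h(n)$ counts lattice points of $\Lambda$ lying in the positive part of the set $\{Q(a,b,c,d) \le n\}$.

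Next I would repeat the lattice point counting argument of Theorem~\ref{th_3}. Writing $Y_n = \{(a,b,c,d) \in \R^4_{\ge 0} : 1 \le Q(a,b,c,d) \le n\}$ and using that $\{Q \le n\} \cap \R^4_{\ge 0}$ is a dilation by factor $\sqrt n$ of $X$, Lemma~\ref{lem_vol} gives $\Vol(\{Q \le n\} \cap \R^4_{\ge 0}) = n^2 \sqrt{3}\,\L(\pi/3)$. Since $\Lambda$ has covolume $27$, the number of its points in this region is asymptotic to
\[
\frac{1}{27}\, n^2 \sqrt 3\, \L(\pi/3) = \frac{\sqrt 3}{27} \L\!\left(\tfrac{\pi}{3}\right) n^2,
\]
which produces the main term.

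Finally I would handle the error. The contribution to $|h(n) - \frac{1}{27}\Vol|$ from the lattice-counting discrepancy is still controlled by the three-dimensional Euclidean volume of the boundary of $\{Q \le n\} \cap \R^4_{\ge 0}$ (up to a constant depending only on $\Lambda$ and on a fundamental domain for it). This boundary volume was shown to be $O(n^{3/2})$ in Theorem~\ref{th_3} by the scaling/sub-additivity argument $r(4t) \le r(4) + 8 r(t)$; that argument does not depend on the lattice, so it transfers without modification. I do not expect a serious obstacle: everything is parallel to Theorem~\ref{th_3}, and the only new input is the index computation $[\Z^4:\Lambda]=27$.
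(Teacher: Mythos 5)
Your proposal is correct and follows the same route as the paper, which likewise just notes that the congruence sublattice has covolume $27$ in $\Z^4$ and then repeats the proof of Theorem~\ref{th_3} verbatim (main term scaled by $1/27$, boundary-volume error estimate unchanged). Your write-up actually supplies more detail than the paper does, in particular the explicit index computation $[\Z^4:\Lambda]=3^3$.
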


\section{A parametrization of equilateral triangulations of $S^2$ with six centrally-symmetric points with defects $2\pi/3$}

%


Let $\omega = e^{ \frac{2 \pi i}{6}}=\frac{1+\sqrt{-3}}{2}$. Consider the Eisenstein lattice\index{Eisenstein lattice} $$\Eis = \mathbb Z\oplus \mathbb Z\omega^2\subset \mathbb C.$$

Define $\widetilde {Eis} =\frac{1}{1 - \omega^2}\Eis $. Note that $\widetilde {Eis}$ contains $\Eis$, and $\widetilde {Eis}\setminus \Eis$ is the set $z+\Eis$ where $z=\frac{1+\omega}{3}=\frac{1}{1 - \omega^2}$ is the Torricelli point of the triangle with  vertices $0,1, \omega$.

Consider a labelled  triangulation $(\tilde T,A,B,C)$ of a centrally symmetric octahedron with vertices  $A,A',B,B',C,C'$. Take the faces $ABC,ACB',AB'C',AC'B$, make a cut along $AC'$, and develop the obtained polygon onto the plane such that $A$ goes to $0\in\mathbb \Eis$ under our developing map, and the vertices of $T$ go to $\Eis$. Then the developing map is defined up to the action of $\mathbb Z_6$ by rotations, because under the developing map we preserve the local orientation at $A$.

\begin{figure}
\includegraphics[scale=0.2]{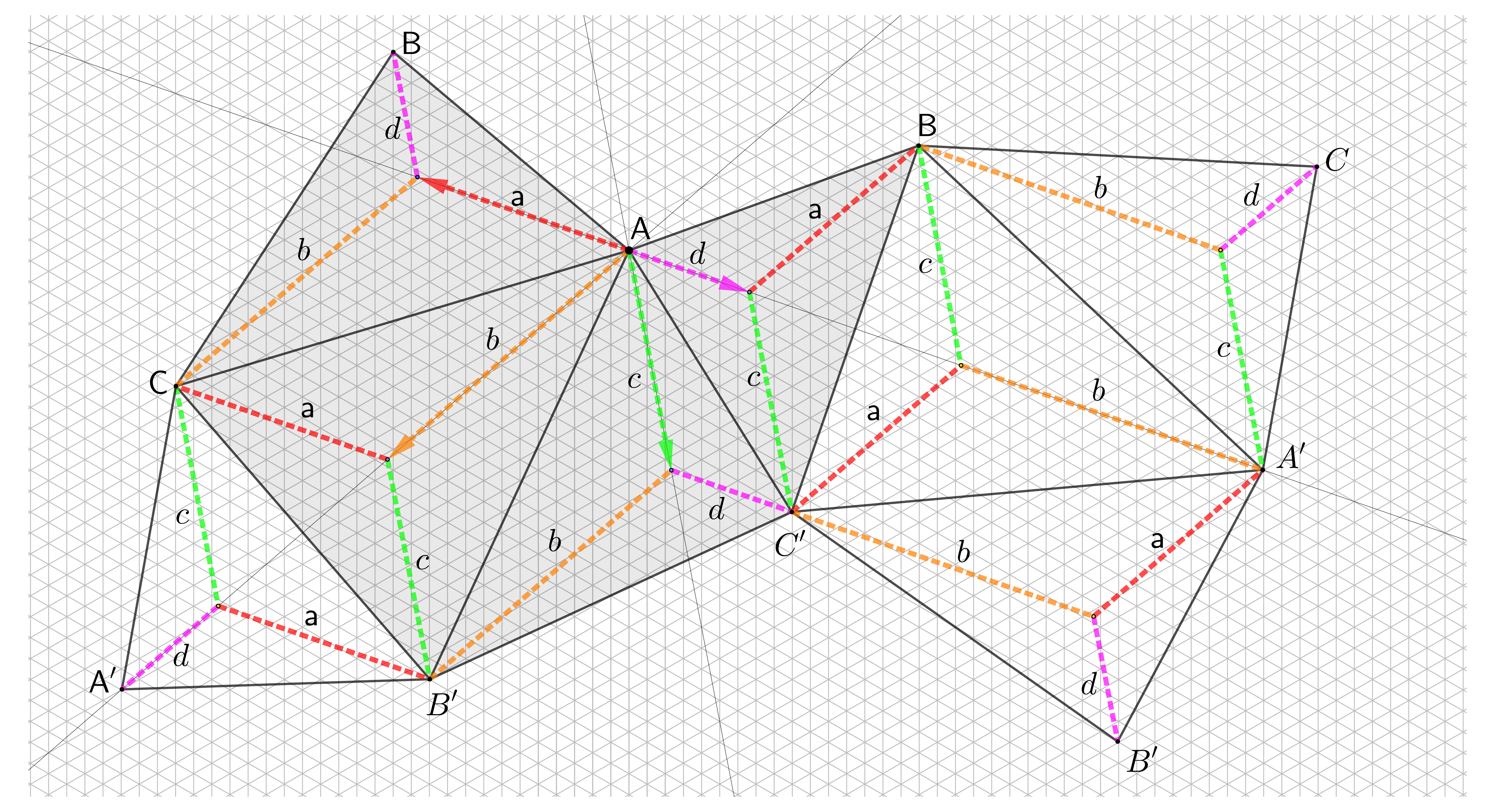}
\caption{A developing of the octahedron $AA'BB'CC'$ on $\R^2$ is presented, $A=0$, vertices $A,B,C,A',B',C'$ go to the lattice $\Eis$. Note that a triangulation of $\R P^2$ can be obtained from the grey area by gluing $AB$ to $AB$, then $BC$ to $B'C'$ and then $BC'$ to $B'C$. Note that the Torricelli centers of the faces do not belong to $\Eis$ but belong to $\widetilde {Eis}$, e.g. see the Torricelli center of $ABC$.}
\label{fig_dev}
\end{figure}

Let $\vec a, \vec b, \vec c, \vec d$ be the vectors in $\mathbb C$ connecting the point $0\in\Eis$ and the Torrichelli points of the four faces $ABC, ACB', AB'C',AC'B$ of $F$. 

\begin{lemma} 
Under the developing map vectors $\vec a, \vec b, \vec c, \vec d$ go to $\widetilde {Eis}$.
\end{lemma}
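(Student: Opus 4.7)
I would reduce the lemma to two simple observations about the developed picture.

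First, every face of $F$ is an \emph{equilateral} triangle. Indeed, at each vertex $v$ of $F$ four unit equilateral triangles of $\tilde T$ meet and fill the cone angle $4\pi/3$, and the four faces of $F$ containing $v$ partition these four unit triangles. So the angle at $v$ of each such face is a positive integer multiple of $\pi/3$, and the four multiples sum to $4$; hence each equals $1$. Applied at all three corners of any face $\Phi$ of $F$, this shows that $\Phi$ has all angles $\pi/3$, so $\Phi$ is equilateral with some integer side length and, upon development, with vertices in $\Eis$.

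Second, for an equilateral triangle the Fermat--Torricelli point coincides with the centroid, and a straightforward one-line check (using the identity $(1+\omega)(1-\omega^2)=3$) shows that for any counterclockwise-oriented equilateral triangle with vertices $P,Q,R\in \Eis$ one has $R-P=\omega(Q-P)$ and hence
\[
\frac{P+Q+R}{3} = P + \frac{(1+\omega)(Q-P)}{3} = P + \frac{Q-P}{1-\omega^2},
\]
which lies in $\widetilde{Eis} = \frac{1}{1-\omega^2}\Eis$, since $P\in\Eis\subset \widetilde{Eis}$ and $Q-P\in\Eis$.

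To finish, I would apply these two observations to the four faces $ABC,ACB',AB'C',AC'B$ that all contain $A$: under the developing map $A\mapsto 0\in\Eis$, each face becomes an equilateral triangle with vertices in $\Eis$, and the endpoints of $\vec a,\vec b,\vec c,\vec d$ are precisely the centroids of these four triangles, hence lie in $\widetilde{Eis}$. A small consistency check --- that the four angles at $A$ sum to $4\pi/3$, matching the cone angle of $F$ at $A$, so the four developed faces fit around $0$ without overlap --- is already built into the angle-counting above. The only substantive step is that angle-count forcing the faces of $F$ to be equilateral; everything else is routine lattice bookkeeping.
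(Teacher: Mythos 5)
There is a genuine gap, and it is located in your first observation: the faces of $F$ are \emph{not} equilateral in general, and the angle count meant to establish this fails at the step ``the four faces of $F$ containing $v$ partition these four unit triangles.'' That step tacitly assumes the edges of $F$ emanating from $v$ run along edges of $\tilde T$; but the edges of $F$ are geodesics joining conical points, and generically they cut through the interiors of the unit triangles, so the angle of a face of $F$ at a vertex need not be a multiple of $\pi/3$. Indeed, the entire point of the four--parameter family $(a,b,c,d)$ is that the Fermat--Torricelli point of a face sits at three generally distinct distances from the face's vertices (a triangle is equilateral iff these three distances coincide): for instance, for the triangulation with $z=\frac{1}{1-\omega^2}$, $(a,b,c,d)=(4,1,1,1)$ in Figure~\ref{fig006}, the face $ABC$ has Torricelli distances proportional to $4,1,1$. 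A quick global sanity check: if all eight faces were lattice equilateral triangles of side $s$, the area of $F$ would be $2\sqrt3\,s^2$ with $s^2\in\Z$, whereas it equals $\sqrt3\,Q(a,b,c,d)$, and $Q=5$ for $(a,b,c,d)=(2,1,1,0)$, which is not of the form $2s^2$. Your second observation (the centroid of a positively oriented equilateral triangle with vertices in $\Eis$ lies in $\widetilde{Eis}$, via $(1+\omega)(1-\omega^2)=3$) is correct, but it is applied to a hypothesis that is false, so the proof does not go through.

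For comparison, the paper's argument makes no claim about the shapes of the faces. It uses only that the developed vertices of $F$ lie in $\Eis$, so the Torricelli parallelograms attached to the edges $AC$, $AB'$, $AC'$, $AB$ give $\vec a+\vec b,\ \vec b+\vec c,\ \vec c+\vec d,\ \vec a+\vec d\omega^2\in\Eis$ (the factor $\omega^2$ accounts for the cone angle $4\pi/3$ at $A$, so the four developed faces do not close up around $0$); the alternating combination $(\vec a+\vec b)-(\vec b+\vec c)+(\vec c+\vec d)-(\vec a+\vec d\omega^2)=\vec d(1-\omega^2)\in\Eis$ then gives $\vec d\in\widetilde{Eis}$, and $\vec c,\vec b,\vec a$ follow by subtracting within $\widetilde{Eis}$. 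Any repair of your argument would have to replace ``each face is equilateral'' with this kind of bookkeeping.
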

\begin{proof}
All the vertices of the octahedron are developed into the lattice points. Then the sums $\vec a + \vec b, \vec b + \vec c, \vec c + \vec d, \vec a + \vec d\omega^2$ also belong to $\Eis$. Then, $\vec a + \vec b - (\vec b + \vec c) + \vec c + \vec d - (\vec a + \vec d\omega^2) = \vec d(1 - \omega^2) \in \Eis$, hence $\vec d\in \frac{1}{1 - \omega^2}\Eis = \widetilde {Eis}$. Then, $\vec c+\vec d\in\Eis$ and the latter is a sublattice in $\widetilde {Eis}$, therefore $\vec c\in\widetilde {Eis}$. Similarly, $\vec b,\vec a\in \widetilde {Eis}$.
\end{proof}

\begin{Definition}
The vectors $$\vec e_x = {1}/(1 - \omega^2), \vec e_y = {\omega^2}/(1 - \omega^2)$$ form a basis in the lattice $\widetilde{Eis}$. Each vector in $\widetilde{Eis}$ can be expressed as $x\vec e_x + y\vec e_y$, $(x, y) \in \mathbb Z^2$. There are three cases for the sum $(x + y) \mod 3$. The lattice $\widetilde{Eis}$ is divided into three subsets:  $$\widetilde{Eis}_k = \{x\vec e_x+y\vec e_y\ in\  \widetilde{Eis}| (x + y) \equiv k \mod 3\}.$$ 

\end{Definition}
Note that $\vec e_x -\vec e_y = {1}/(1 - \omega^2)-{\omega^2}/(1 - \omega^2) = 1\in\Eis$ and $$2\vec e_x +\vec e_y={2}/(1 - \omega^2)+{\omega^2}/(1 - \omega^2) = \frac{2+\omega^2}{1 - \omega^2} = \frac{2+\omega^2}{1 - \omega^2}=\omega\in\Eis.$$  This implies that $\widetilde{Eis}_0 = Eis$.

However the vectors $\vec a, \vec b, \vec c, \vec d$ are not arbitrary. 
\begin{lemma}
If the vectors $\vec a, \vec c$ lie in $\widetilde{Eis}_k$ then $\vec b, \vec d$ lie in $\widetilde{Eis}_{-k}$.
In other words, there are three cases:
\begin{enumerate}
    \item $\vec a, \vec b ,\vec c ,\vec d \in \widetilde{Eis}_0$; 
    \item $\vec a, \vec c  \in \widetilde{Eis}_1$ and $\vec b, \vec d  \in \widetilde{Eis}_2$;
    \item $\vec a, \vec c  \in \widetilde{Eis}_2$ and $\vec b, \vec d  \in \widetilde{Eis}_1$.
\end{enumerate}
\end{lemma}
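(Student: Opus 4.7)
The plan is to deduce the lemma from two ingredients: (i) the four linear relations $\vec a+\vec b,\ \vec b+\vec c,\ \vec c+\vec d,\ \vec a+\vec d\omega^2\in \Eis=\widetilde{\Eis}_0$ that were already used in the proof of the previous lemma, and (ii) the observation that the partition $\widetilde{\Eis}=\widetilde{\Eis}_0\sqcup\widetilde{\Eis}_1\sqcup\widetilde{\Eis}_2$ is compatible with both addition and with multiplication by $\omega^2$.

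First I would check additivity: since $\widetilde{\Eis}_k$ is the preimage of $k\in\mathbb Z/3$ under the linear map $x\vec e_x+y\vec e_y\mapsto x+y \pmod 3$, we have $\widetilde{\Eis}_k+\widetilde{\Eis}_\ell\subset\widetilde{\Eis}_{k+\ell}$, and in particular if $u+v\in\widetilde{\Eis}_0$ with $u\in\widetilde{\Eis}_k$ then $v\in\widetilde{\Eis}_{-k}$.

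Next I would verify that multiplication by $\omega^2$ preserves the grading. Using $\vec e_x=1/(1-\omega^2)$ and $\vec e_y=\omega^2/(1-\omega^2)$ one computes $\omega^2\vec e_x=\vec e_y$ and, from $1+\omega^2+\omega^4=0$, $\omega^2\vec e_y=-\vec e_x-\vec e_y$. Hence $\omega^2(x\vec e_x+y\vec e_y)=-y\vec e_x+(x-y)\vec e_y$, and the new coordinate sum is $-y+(x-y)=x-2y\equiv x+y\pmod 3$. Therefore $\omega^2\cdot\widetilde{\Eis}_k=\widetilde{\Eis}_k$.

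With these two facts, the lemma is immediate. Assume $\vec a\in\widetilde{\Eis}_k$. From $\vec a+\vec b\in\widetilde{\Eis}_0$ I conclude $\vec b\in\widetilde{\Eis}_{-k}$; from $\vec b+\vec c\in\widetilde{\Eis}_0$ I get $\vec c\in\widetilde{\Eis}_{k}$; from $\vec c+\vec d\in\widetilde{\Eis}_0$, $\vec d\in\widetilde{\Eis}_{-k}$; and the remaining relation $\vec a+\vec d\omega^2\in\widetilde{\Eis}_0$ is consistent, since $\vec d\omega^2\in\widetilde{\Eis}_{-k}$ by the grading invariance under $\omega^2$. Specializing $k=0,1,2$ yields the three cases in the statement. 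There is no real obstacle here once the $\omega^2$-invariance of the grading is established, which is the only non-trivial computation; the rest is bookkeeping with the four linear relations already in hand from the proof of the preceding lemma.
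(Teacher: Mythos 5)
Your proof is correct and is essentially the paper's own argument: the paper's (very terse) proof cites exactly the two facts you establish, namely $\widetilde{\Eis}_k+\widetilde{\Eis}_m\subset\widetilde{\Eis}_{k+m}$ and $\omega^2\widetilde{\Eis}_k=\widetilde{\Eis}_k$, applied to the four relations $\vec a+\vec b,\ \vec b+\vec c,\ \vec c+\vec d,\ \vec a+\vec d\omega^2\in\Eis=\widetilde{\Eis}_0$ from the preceding lemma. Your version merely fills in the coordinate computation for the $\omega^2$-invariance, which is a welcome addition but not a different route.
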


\begin{proof}
This follows from $\widetilde{Eis}_k + \widetilde{Eis}_m = \widetilde{Eis}_{k + m}$ and fact that $\omega^2\widetilde{Eis}_k = \widetilde{Eis}_k$.
\end{proof}

Thus we constructed a bijection between the labelled triangulations $(\tilde T, A,B,C)$ up to isometry and certain 4-tuples of vectors $\vec a, \vec b, \vec c, \vec d \in \widetilde{Eis}$ up to a $\Z_6$ action.

One could consider the sublattice $\Eis_0 \subset \Eis$, $$\Eis_0 = \left\{ x + y \omega^2 \mid x, y \in \Z , x + y \equiv 0 \pmod 3 \right\} .$$
The cosets of $\Eis_0$ in $\Eis$ are $\Eis_0, \Eis_1, \Eis_2$ where 
$$\Eis_k = \left\{ x + y \omega^2 \mid x, y \in \Z , x + y \equiv k \pmod 3 \right\} .$$

If $L$ is a lattice, let $\Prim L = \left\{ v \in L \setminus 0 \mid \nexists w \in L, n > 1 : n w = v \right\} $.

Let $E_0=\Prim \Eis \cap \Eis_0$ and $E_{\ne 0} =\Prim \Eis \cap (\Eis_1 \sqcup \Eis_2)$, then
\begin{equation}
	\label{very_important_fact}
	\Prim \Eis_0 = E_0 \sqcup 3 E_{\ne 0}.
\end{equation}
Indeed, it follows from $\Prim \Eis = E_0 \sqcup E_{\ne 0}$ that each primitive vector $v$ of $\Eis_0$ is either a primitive vector in $\Eis$ (and then it is an element of $E_0$) or there exists $v'\in \Eis, v=kv', k>1$ and $v'\notin \Eis_0, v'\in E_{\ne 0}$. In the latter case, $3v'\in \Eis_0$ (this is true for each vector in $\Eis$), therefore $k$ can be equal to three only. Therefore $v\in 3 E_{\ne 0}$.

\begin{theorem}

	$$f(n) = \frac{1}{6} 
	\#\{(z\in \Prim \Eis,(a, b, c, d)\in \Z^4_{>0})| \frac{2}{3} |z|^2 Q(a,b,c,d) \le n \},$$
	
where i)$z \in E_0 $ and $a,b,c,d$ are arbitrary  or ii)$z \in E_{\ne 0} , a \equiv b \equiv c \equiv d \pmod 3$.

\end{theorem}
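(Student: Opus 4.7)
The plan is to invoke the bijection from the previous section between labelled octahedral triangulations $(\tilde T, A, B, C)$ with at most $2n$ triangles and $\Z_6$-orbits of 4-tuples $(\vec a, \vec b, \vec c, \vec d) \in \widetilde{Eis}^{\,4}$ satisfying the coset condition of the preceding lemma, together with the area bound $4\,Q(|\vec a|, |\vec b|, |\vec c|, |\vec d|)\le 2n$. By the first lemma of this chapter $f(n)$ equals the number of such orbits, so it suffices to reparametrize these 4-tuples by a primitive lattice vector $z$ and a quadruple of positive integers $(a,b,c,d)$.

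The key geometric input (already used in Section~3) is that for each edge of $F$ the two Torricelli points of the adjacent faces, together with the edge's endpoints, form a parallelogram with acute angle $\pi/3$. Applied at vertex $A$, this forces $\vec a, \vec b, \vec c, \vec d$ (going to the Torricelli points of the four faces incident to $A$, in counterclockwise order) to have consecutive angles equal to $\pi/3$ in the developing map, summing to the cone angle $4\pi/3$ at $A$. Hence one may write
\[
(\vec a,\, \vec b,\, \vec c,\, \vec d) \;=\; \frac{z}{1-\omega^2}\,\bigl(a,\; b\,\omega,\; c\,\omega^2,\; d\,\omega^3\bigr)
\]
for a unique $z\in \C$ and positive reals $a,b,c,d$. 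Since the nonzero lattice points of $\widetilde{Eis}$ on the ray through $\vec a$ form $\Z_{>0}\cdot v$ for a unique primitive vector $v = z/(1-\omega^2)$ with $z\in \Prim\Eis$, the parameters $a,b,c,d\in \Z_{>0}$ and $z\in \Prim\Eis$ are uniquely determined. The residual $\Z_6$-ambiguity of the developing map then acts by $z\mapsto \omega^k z$ (fixing $a,b,c,d$) and is free on $\Prim\Eis$, explaining the overall factor $\tfrac16$.

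Next, I would translate the coset condition and the area bound. Writing $\vec e_x = 1/(1-\omega^2)$ and noting that the $\Eis$-module structure on $\widetilde{Eis}$ induces an action of $\Eis/\Eis_0 \cong \Z/3$ on $\widetilde{Eis}/\Eis \cong \Z/3$ which is ordinary multiplication in $\Z/3$, a direct computation yields
\[
\vec a \in \widetilde{Eis}_{a[z]},\quad \vec b \in \widetilde{Eis}_{2b[z]},\quad \vec c \in \widetilde{Eis}_{c[z]},\quad \vec d \in \widetilde{Eis}_{2d[z]},
\]
where $[z]\in \Z/3$ is the class of $z$. The lemma's conditions $[\vec a]=[\vec c]$, $[\vec b]=[\vec d]$, $[\vec a]+[\vec b]\equiv 0\pmod 3$ then become precisely the dichotomy: either $z\in E_0$ (i.e.\ $[z]=0$), with no further constraint on $(a,b,c,d)$, or $z\in E_{\ne 0}$, with $a\equiv b\equiv c\equiv d\pmod 3$. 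Since $|1-\omega^2|^2=3$, we have $|\vec a|^2 = a^2|z|^2/3$ and similarly for $\vec b,\vec c,\vec d$, so $Q(|\vec a|,|\vec b|,|\vec c|,|\vec d|) = \tfrac{|z|^2}{3}Q(a,b,c,d)$, and the triangle bound $4Q(|\vec a|,\dots)\le 2n$ rewrites as $\tfrac{2}{3}|z|^2 Q(a,b,c,d)\le n$, as claimed.

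I expect the main obstacle to be the geometric assertion that the four Torricelli vectors at $A$ make successive angles of exactly $\pi/3$ in the developing map. This reduces to $\angle T_1 A T_2 = \pi - \angle A T_1 C = \pi/3$ within each developed parallelogram across an edge at $A$, which uses the Torricelli identity $\angle A T_i C = 2\pi/3$; I would either cite the parallelogram statement from \cite{wang2021shapes} or expand it into a short geometric lemma before the theorem.
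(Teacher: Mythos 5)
Your proposal is correct and follows essentially the same route as the paper: decompose $(\vec a,\vec b,\vec c,\vec d)$ as $z'(a,b\omega,c\omega^2,d\omega^3)$ with $z'=z/(1-\omega^2)$ primitive, convert the triangle count to $\frac{4}{3}|z|^2Q(a,b,c,d)\le 2n$, translate the integrality of the octahedron's vertices into the dichotomy $z\in E_0$ versus $z\in E_{\ne 0}$ with $a\equiv b\equiv c\equiv d\pmod 3$, and divide by the free $\Z_6$-action $z\mapsto\omega z$. Your explicit computation of the classes $[\vec a]=a[z]$, $[\vec b]=2b[z]$, etc., and your justification of the successive $\pi/3$ angles via the parallelogram across each edge are more detailed than the paper's "by a direct computation", but the argument is the same.
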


\begin{proof}
	Each labelled  triangulation $(\tilde T,A,B,C)$  is determined by the vectors  $$\vec a, \vec b, \vec c, \vec d \in \frac{1}{1 - \omega^2} \Eis=\widetilde \Eis$$ with the oriented angles $ \angle (\vec a, \vec b) = \angle (\vec b, \vec c) =  \angle (\vec c, \vec d) =  \frac{\pi}{3}$.
One could find $z' \in \Prim \widetilde \Eis$ --- the primitive vector proportional to $\vec a$.
Then $$(\vec a, \vec b, \vec c, \vec d) = z'\cdot(a, b \omega , c \omega^2 , d \omega^3), a, b, c, d \in \Z_{>0}.$$

Let $z = (1 - \omega^2) z' \in \Prim \Eis$.
The number of triangles in $\tilde T$ is equal to the total area of the octahedron divided by the area of one equilateral triangle.
The area equals $\sin {\frac{\pi}{3}} \cdot 2 |z'|^2 Q(a,b,c,d) = \frac{1}{\sqrt 3} |z|^2 Q(a,b,c,d)$ whereas the area of one equilateral triangle is $\frac{\sqrt 3}{ 4}$. 
So the total number of triangles is equal to $\frac{4}{3} |z|^2 Q(a,b,c,d)$. Recall that $f(n)$ is the number of labelled triangulations $(\tilde T,A,B,C)$ with at most $2n$ triangles. The last condition is equivalent to $\frac{2}{3} |z|^2 Q(a,b,c,d) \le n$.

Let us study the conditions $\vec a + \vec b, \vec b + \vec c, \vec c + \vec d \in \Eis$.
In the case $z' \in \Eis \iff z \in \Eis_0$ the condition is satisfied automatically. 
Otherwise, $z' \notin \Eis \iff z \in \Eis_1 \sqcup \Eis_2$ the condition on their sums $\vec a + \vec b$, etc.,  belonging to $\Eis$ is equivalent to $a \equiv b \equiv c \equiv d \pmod 3$ by a direct computation.

Finally, we notice that the triangulations with $z$ and $ \omega z$ determine isometric triangulations.
This adds the factor $\frac{1}{6}$.  
\end{proof}

Given a lattice $L\subset \R^2$ and $\mathrm{Re}(s)>1$ we define the Epstein zeta function\index{Epstein zeta function} $$\zeta(L,s)=\sum_{\gamma\in L\setminus 0} <\gamma,\gamma>^s.$$ 

One can prove that $$\zeta(\Eis,s) =\sum_{z\in\Eis\setminus 0}|z|^{-2s}= 6\zeta_{\mathbb Q[\sqrt{-3}]}(s)= 6\zeta(s)L(\chi_{-3},s).$$  We refer to \cite{henn2016hexagonal} for details.

Now we are ready to estimate $f(n)$.
\begin{theorem}
	 \[
		 f(n) = \frac{1}{20} \sqrt{3} \cdot\L( \frac{\pi}{3} ) \zeta^{-1}(4) \zeta(\Eis, 2) n^2 + O(n^{3/2})
	,\] 
	as $n \to \infty$ where
	
$$\frac{1}{20} \sqrt{3} \cdot\L( \frac{\pi}{3} ) \zeta^{-1}(4) \zeta(\Eis, 2) \approx $$
$$\approx 0.20874321250560157071750716031497138622997487996283...$$  

Here $\zeta(s)$ is Riemann's zeta function.

\end{theorem}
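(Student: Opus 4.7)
The plan is to combine the previous theorem's reduction with the asymptotic counts from Theorem~\ref{th_3} and its $h$-analog. For a fixed primitive $z\in \Prim\Eis$, the inequality $\tfrac{2}{3}|z|^2 Q(a,b,c,d)\le n$ becomes $Q(a,b,c,d)\le \tfrac{3n}{2|z|^2}$, so the inner count over $(a,b,c,d)$ equals $g\bigl(\tfrac{3n}{2|z|^2}\bigr)$ when $z\in E_0$ and $h\bigl(\tfrac{3n}{2|z|^2}\bigr)$ when $z\in E_{\ne 0}$. Substituting the main terms yields
\[
 6f(n) \;=\; \frac{9\sqrt{3}\,\L(\pi/3)}{4}\,n^2\!\left(\sum_{z\in E_0}\frac{1}{|z|^4} + \frac{1}{27}\sum_{z\in E_{\ne 0}}\frac{1}{|z|^4}\right) + \mathrm{Err}(n).
\]

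The arithmetic step is to express the bracketed sum in terms of $\zeta(\Eis,2)$. The decomposition $\Eis\setminus 0=\bigsqcup_{k\ge 1}k\cdot\Prim\Eis$ gives $\sum_{\Prim\Eis}|z|^{-4}=\zeta^{-1}(4)\zeta(\Eis,2)$. To split this across $\Prim\Eis = E_0 \sqcup E_{\ne 0}$, I would use the observation $\Eis_0 = (1-\omega^2)\Eis$: both are index-$3$ sublattices of $\Eis$ with matching coset conditions, while $|1-\omega^2|^2=3$ yields $\zeta(\Eis_0,2)=\tfrac{1}{9}\zeta(\Eis,2)$ and hence $\sum_{\Prim\Eis_0}|z|^{-4}=\tfrac{1}{9}\zeta^{-1}(4)\zeta(\Eis,2)$. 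Combined with identity~\eqref{very_important_fact}, this produces a linear system for $A=\sum_{E_0}|z|^{-4}$ and $B=\sum_{E_{\ne 0}}|z|^{-4}$,
\[
A+B = \zeta^{-1}(4)\zeta(\Eis,2),\qquad A+\tfrac{1}{81}B = \tfrac{1}{9}\zeta^{-1}(4)\zeta(\Eis,2),
\]
whose solution is $A=\tfrac{1}{10}\zeta^{-1}(4)\zeta(\Eis,2)$ and $B=\tfrac{9}{10}\zeta^{-1}(4)\zeta(\Eis,2)$, so $A+B/27=\tfrac{2}{15}\zeta^{-1}(4)\zeta(\Eis,2)$. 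Threading this through the prefactor $\tfrac{1}{6}\cdot\tfrac{9\sqrt{3}\L(\pi/3)}{4}\cdot\tfrac{2}{15}$ collapses to the advertised constant $\tfrac{1}{20}\sqrt{3}\L(\pi/3)\zeta^{-1}(4)\zeta(\Eis,2)$.

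For the error term, each $z$ with $|z|^2\le\tfrac{3n}{2}$ contributes $O(n^{3/2}|z|^{-3})$ from the $O(m^{3/2})$ tail of $g$ or $h$; the sum $\sum_{z\in\Eis\setminus 0}|z|^{-3}$ converges absolutely since $\Eis\subset\R^2$ has $O(R)$ points with $|z|\asymp R$, yielding total error $O(n^{3/2})$. The tails of the main-term sums (where $|z|^2>\tfrac{3n}{2}$) contribute nothing because $Q\ge 1$ on $\Z_{>0}^4$. The main obstacle I anticipate is purely bookkeeping: keeping straight the factor $\tfrac{1}{27}$ coming from the $h$-sum (covolume of the sublattice $\{a\equiv b\equiv c\equiv d\bmod 3\}$) against the factor $\tfrac{1}{81}$ coming from $3E_{\ne 0}$ in~\eqref{very_important_fact} (a scaling by $3$ inside a degree-$4$ Epstein-type sum), so that the precise $\tfrac{1}{20}$ emerges.
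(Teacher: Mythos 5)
Your proposal follows essentially the same route as the paper's proof: the same decomposition $6f(n)=\sum_{z\in E_0}g\bigl(\tfrac{3n}{2|z|^2}\bigr)+\sum_{z\in E_{\ne 0}}h\bigl(\tfrac{3n}{2|z|^2}\bigr)$, the same linear system obtained from $\Prim\Eis=E_0\sqcup E_{\ne 0}$ together with \eqref{very_important_fact} and the index-$3$ sublattice $\Eis_0$ (the paper writes its generator as $1+\omega$ where you write $1-\omega^2$; both have norm $3$), and the same error estimate via convergence of $\sum_{z\in\Eis\setminus 0}|z|^{-3}$. All the constants check out, so the proposal is correct.
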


\begin{proof}
By the definition, 
\begin{gather*}
	6 f(n) =
	\sum \limits_{z \in E_0} g(\frac{3}{2} |z|^{-2} n) + 
	\sum \limits_{z \in E_{\ne 0}} h(\frac{3}{2} |z|^{-2} n) = \\ 
	\sqrt{3} \L( \frac{\pi}{3} ) \frac{9}{4}\big [\sum \limits_{z \in E_0} (|z|^{-4} n^2+O(|z|^{-2} n)^{3/2}) + 
	\sum \limits_{z \in E_{\ne 0}}( \frac{1}{27} |z|^{-4} n^2 +O(|z|^{-2} n)^{3/2})\big ]= \\ 
	\sqrt{3} \L( \frac{\pi}{3} ) \frac{9}{4} n^2 (
	\sum \limits_{z \in E_{0}} |z|^{-4} + 
	\frac{1}{27} \sum \limits_{z \in E_{\ne 0}} |z|^{-4})+ \text{ ``error term'' }
\end{gather*}

The error term can be estimated as follows: 
$$\sum\limits_{z\in E_0\cup E_{\ne 0}} O((|z|^{-2} n)^{3/2})\leq cn^{3/2} \sum_{z\in \Eis}|z|^{-3}= O(n^{3/2}).$$

To compute the summands notice that 
\begin{gather*}
	\sum \limits_{z \in E_0} |z|^{-4} + 
	\sum \limits_{z \in E_{\ne 0}} |z|^{-4} = \\
	\sum \limits_{z \in \Prim \Eis } |z|^{-4} = \zeta^{-1}(4) \sum \limits_{z \in \Eis \setminus 0 } |z|^{-4} = 
	\zeta^{-1}(4) \zeta(\Eis, 2) 
.\end{gather*}

Indeed, $$\sum \limits_{z \in \Eis \setminus 0 } |z|^{-4} = \sum\limits_{k\in \mathbb Z_{>0}}\big[\sum \limits_{z' \in \Prim \Eis } |kz'|^{-4}\big] =  \zeta(4) \sum \limits_{z' \in \Prim \Eis } |z'|^{-4}$$ since for each vector $z\in \Eis$ there exists $k\in \mathbb Z_{>0}$ and $z'\in \Prim\Eis$ such that $z=kz'$.

Now we use \eqref{very_important_fact} which implies
\begin{gather*}
	\sum \limits_{z \in E_0} |z|^{-4} + \frac{1}{81}
	\sum \limits_{z \in E_{\ne 0}} |z|^{-4} = 
	\sum \limits_{z \in E_0} |z|^{-4} + 
	\sum \limits_{z \in E_{\ne 0}} |3z|^{-4} = \\
	=\sum \limits_{z \in E_0} |z|^{-4} + 
	\sum \limits_{z \in 3E_{\ne 0}} |z|^{-4} = 
	\sum \limits_{z \in \Prim \Eis_0} |z|^{-4} = \\
	=\zeta^{-1}(4) \sum \limits_{z \in \Eis_0 \setminus 0} |z|^{-4} = 
	\zeta^{-1}(4) \sum \limits_{z \in \Eis \setminus 0} |(1 + \omega) z|^{-4} =\\
	=\frac{1}{9} \zeta^{-1}(4) \sum \limits_{z \in \Eis \setminus 0} |z|^{-4} = 
	\frac{1}{9} \zeta^{-1}(4) \zeta(\Eis, 2) 
.\end{gather*}

From this system of linear equations one finds 
\begin{gather*}
	\sum \limits_{z \in E_0} |z|^{-4} = \frac{1}{10}
	 \zeta^{-1}(4) \zeta(\Eis, 2) \\
	\sum \limits_{z \in E_{\ne 0}} |z|^{-4} = \frac{9}{10}
	\zeta^{-1}(4) \zeta(\Eis, 2) 
\end{gather*}

It follows that

	$$f(n) = \frac{1}{6} \sqrt{3} \cdot\L( \frac{\pi}{3} ) \frac{9}{4}n^2 ( \frac{1}{10} + \frac{1}{27} \frac{9}{10}) \zeta^{-1}(4) \zeta(\Eis, 2) + O(n^{3/2})=$$ 
	$$=\frac{1}{20} \sqrt{3}\cdot \L( \frac{\pi}{3} ) \zeta^{-1}(4) \zeta(\Eis, 2) n^2 + O(n^{3/2}).$$

\end{proof}

\section{Examples and computer computations}
\label{sec_comp}

It follows from an Euler characteristic computation that no triangulation of $\R P^2$ with an odd number of triangles exists. 

Here is the list of $f(2n)-f(2n-1)$, i.e., the number of labelled triangulations $(T,A,B,C,\Delta)$ of $\R P^2$ with exactly $2n$ triangles, for $n=1,\dots, 74$:

$$0, 1, 4, 0, 16, 1, 12, 17, 20, 0, 46, 8, 18, 34, 40,$$
$$ 12, 64, 9, 36, 48, 60, 6, 94, 41, 24, 64, 72, 24, 112, 8,$$
$$ 60, 81, 94, 24, 160, 56, 42, 82, 114, 24, 160, 58, 60, 126, 96,$$
$$ 30, 190, 60, 96, 81, 160, 54, 184, 65, 72, 194, 132, 24, 238, 96,$$
$$ 90, 130, 220, 60, 232, 62, 84, 192, 214, 24, 286, 105, 90, 160.$$

Only one triangulation of $\R P^2$ with four triangles exists, see Figure~\ref{fig004}.

\begin{figure}[h]
\includegraphics[scale=0.1]{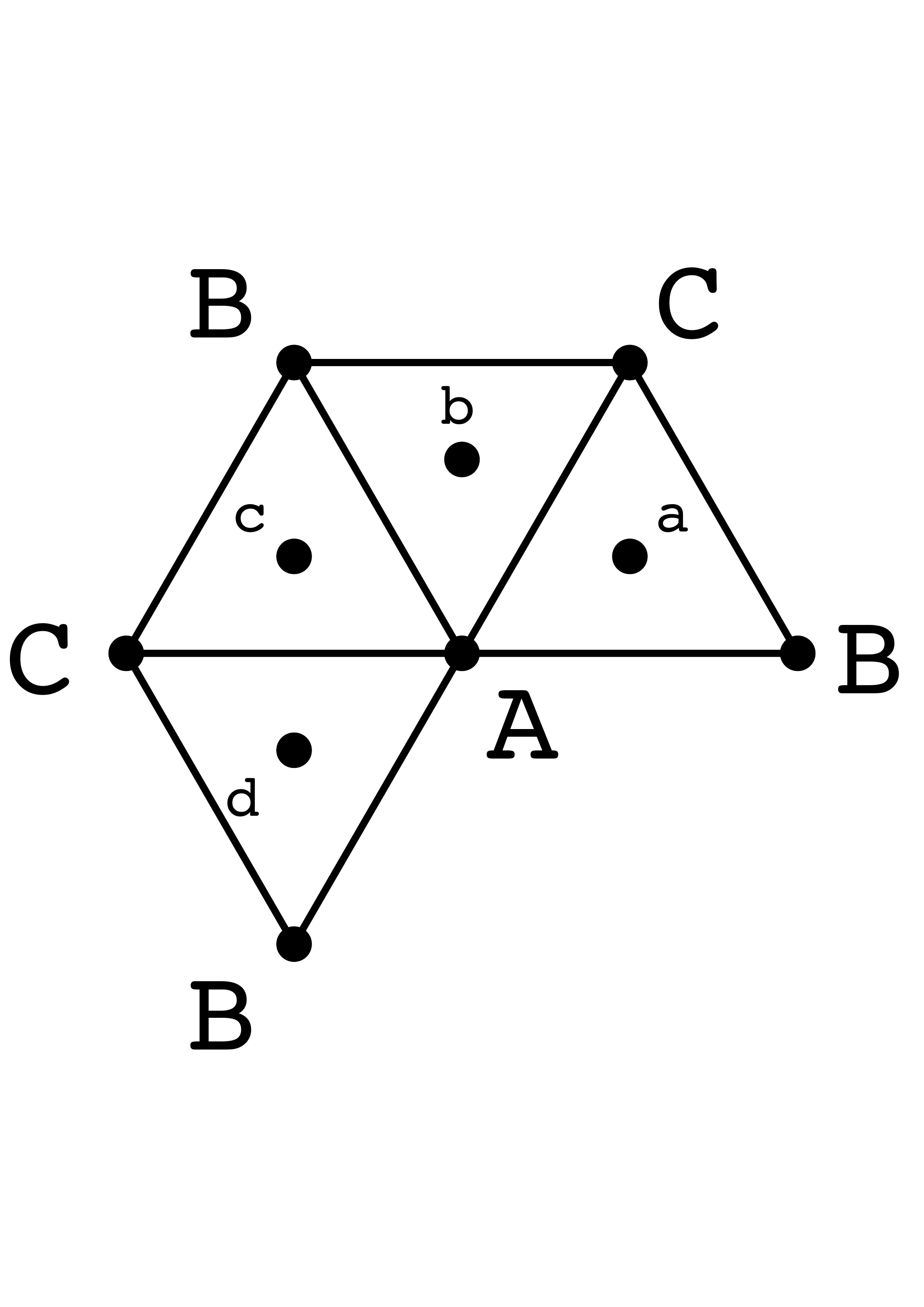}
\caption{In this case $z=\frac{1+\omega}{3}=\frac{1}{1-\omega^2}, (a,b,c,d)=(1,1,1,1)$.}
\label{fig004}
\end{figure}

Four marked triangulations of $\R P^2$ with $6$ triangles exist, see Figure~\ref{fig005}. 
\begin{figure}[h]
\includegraphics[scale=0.1]{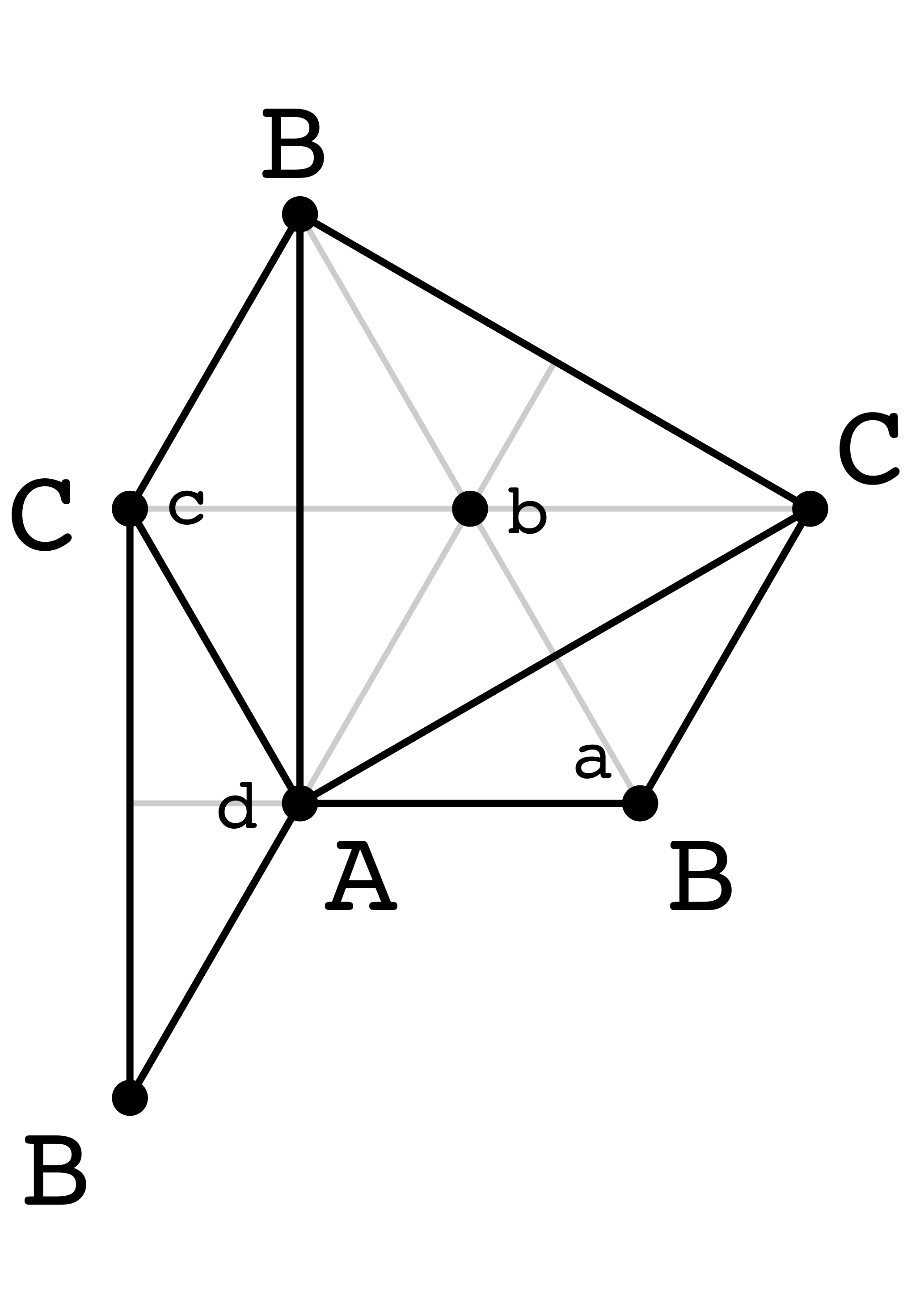}
\caption{In this case $z=1,(a,b,c,d)=(1,1,1,0)$, and we count this triangulation four times as $(a,b,c,d)=(1,1,1,0),(1,1,0,1),(1,0,1,1),(0,1,1,1)$. Two of them are isometric while another two differ by relabelling $B\to C$.}
\label{fig005}
\end{figure}

No triangulation of $\R P^2$ with $8$ triangles exists.

Sixteen triangulations with $10$ triangles exist, see Figure~\ref{fig006}.
\begin{figure}[h]
\includegraphics[scale=0.1]{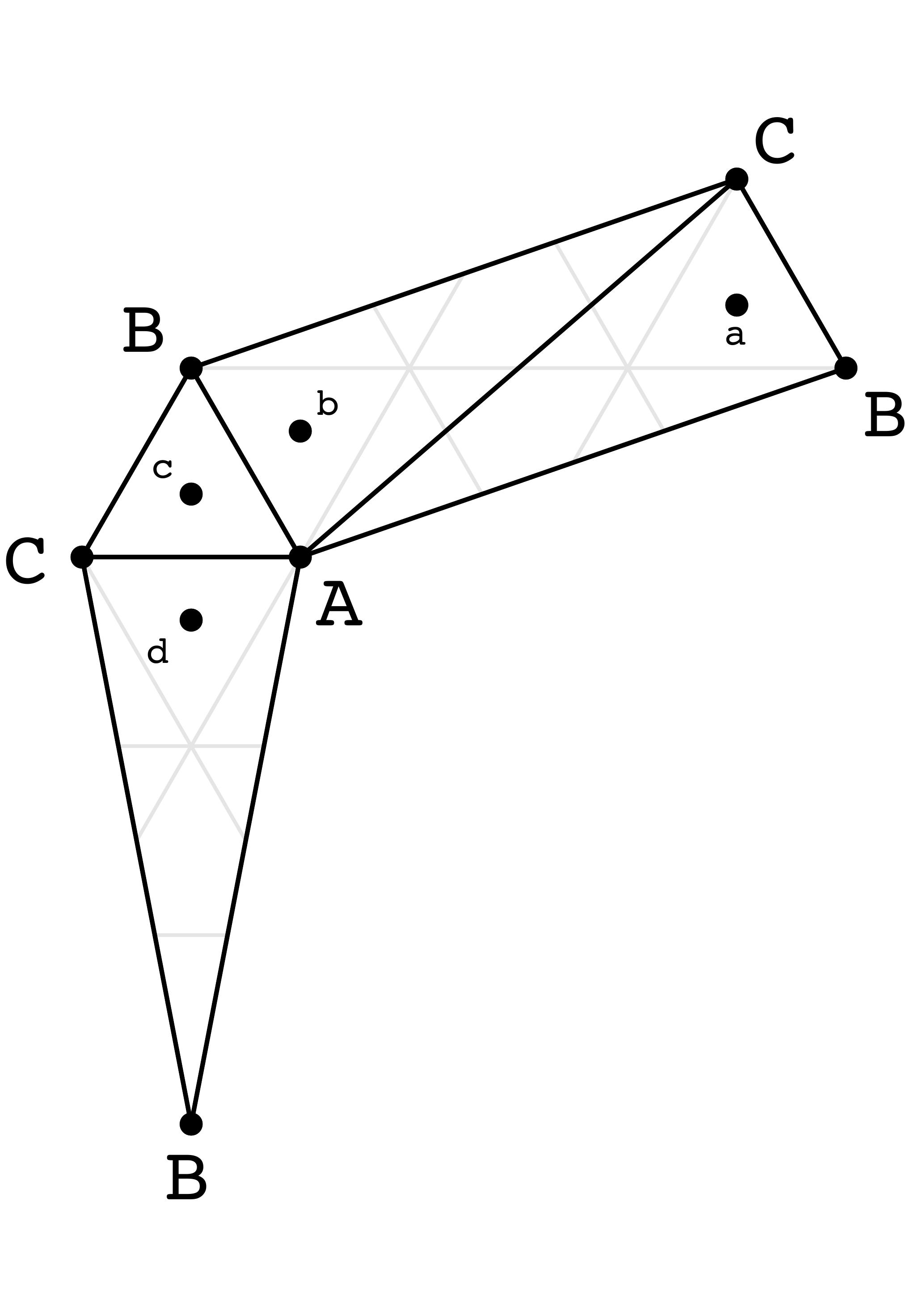}
\includegraphics[scale=0.1]{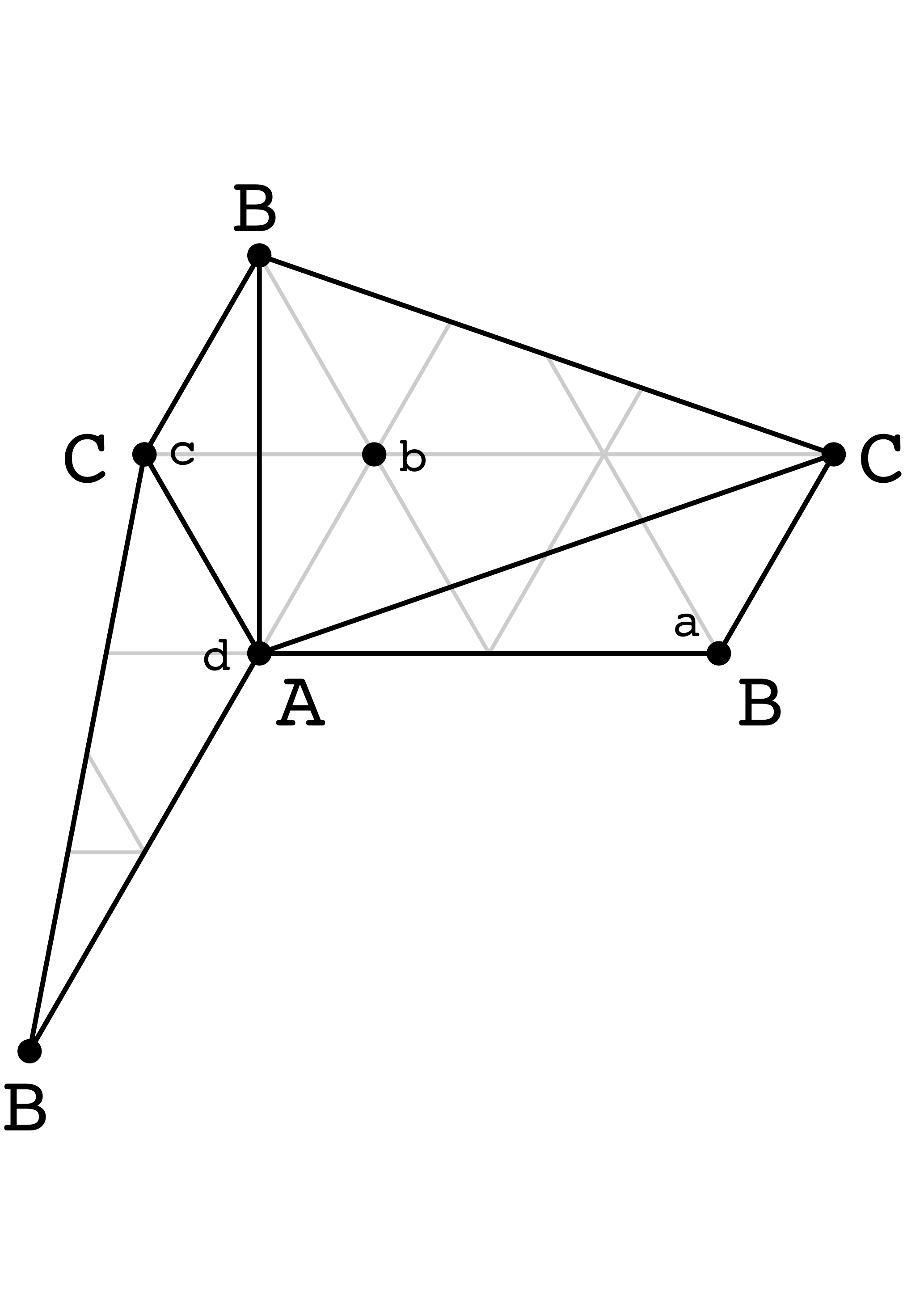}
\includegraphics[scale=0.1]{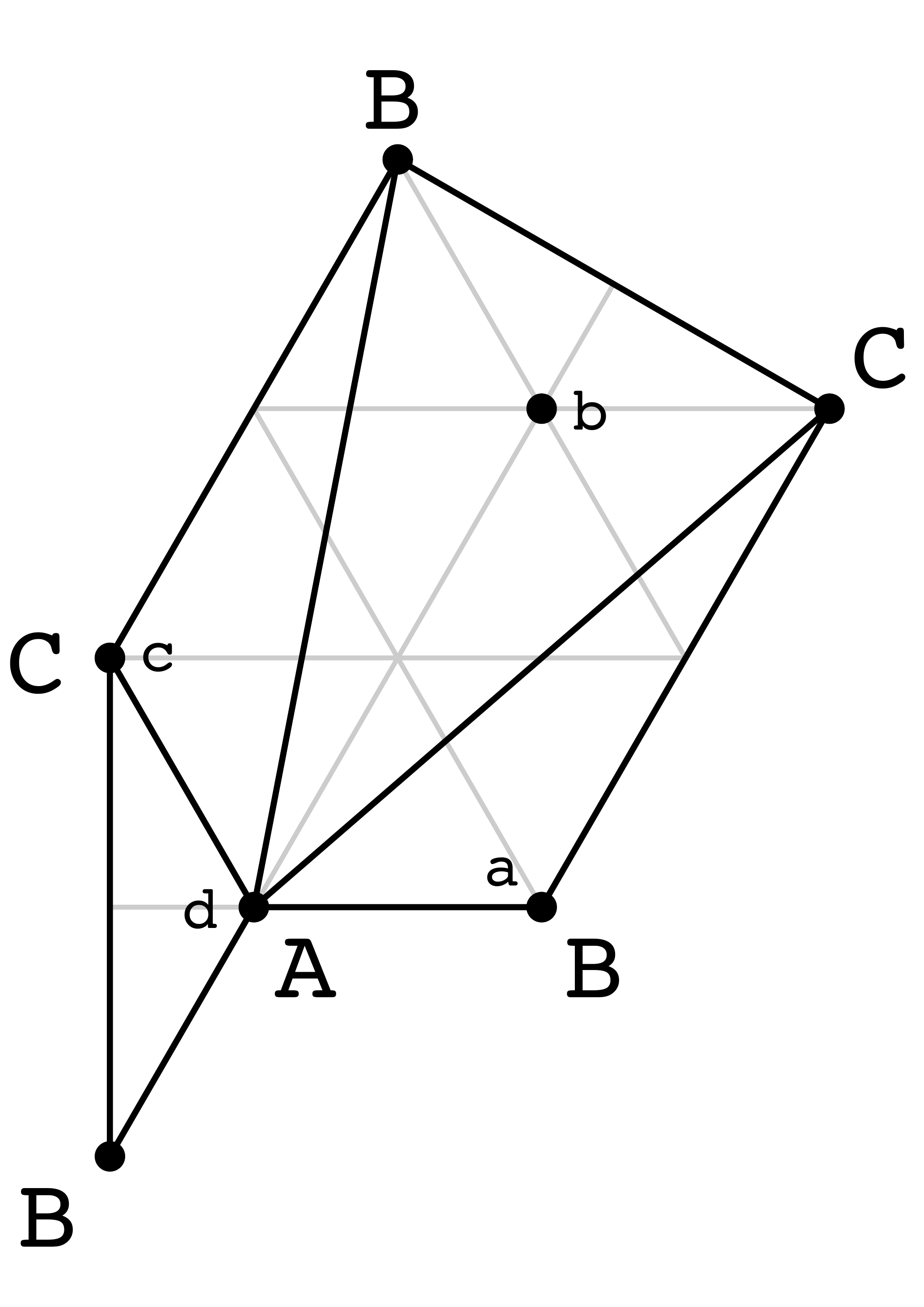}
\caption{The leftmost picture: $z=\frac{1+\omega}{3},(a,b,c,d)=(4,1,1,1)$ (counted four times). The central pictures and the rightmost picture are representatives for the tuple $(2,1,1,0)$ (counted $12=8+4$ times). Namely, $z=1,(a,b,c,d)=(2,1,1,0)$ in the central picture (counted eight times). The rightmost picture: $z=1,(a,b,c,d)=(1,2,1,0)$ (counted four times).}
\label{fig006}
\end{figure}
Only one triangulation with $12$ triangles exists, see Figure~\ref{fig002}.
\begin{figure}[h]
\includegraphics[scale=0.1]{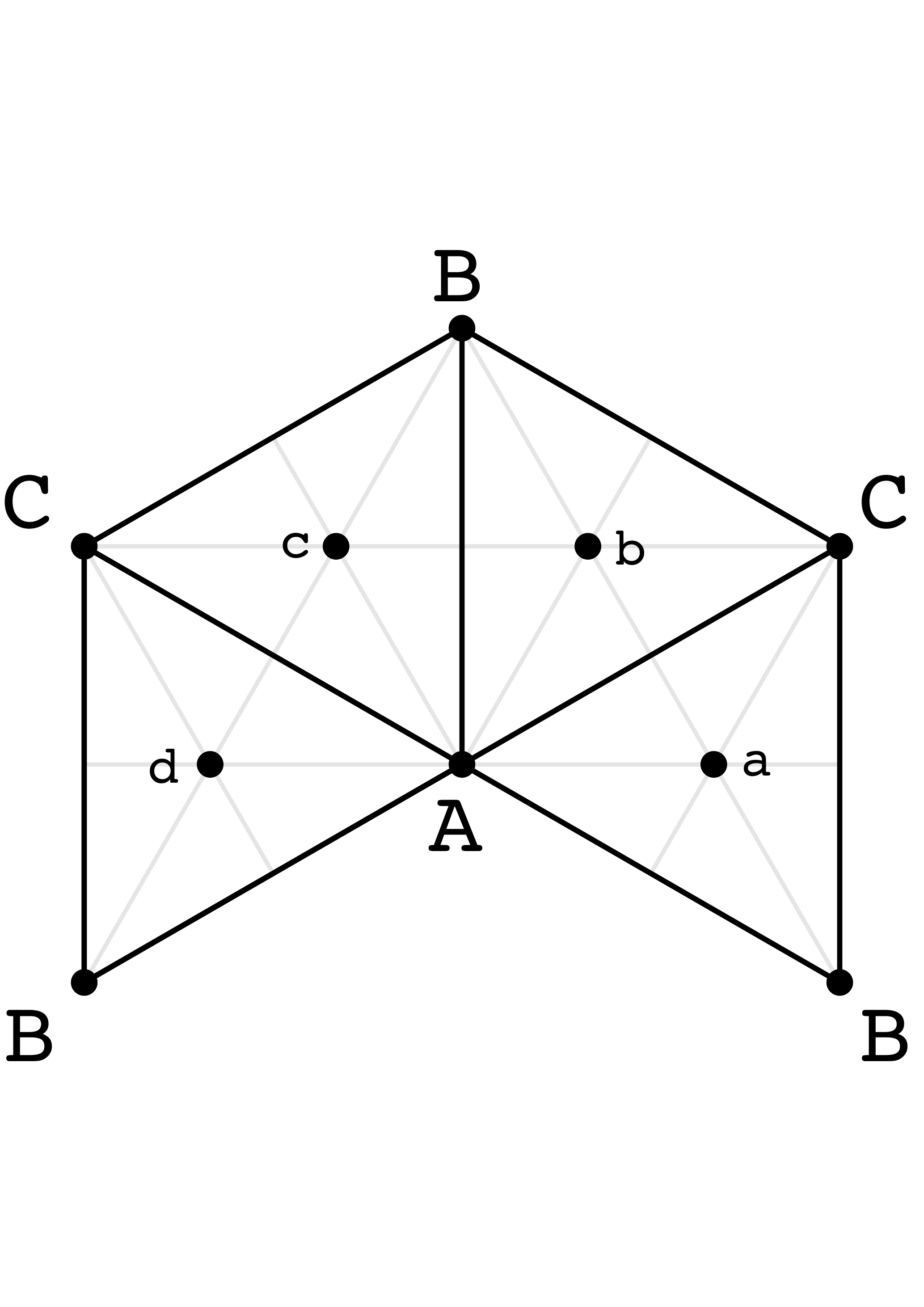}
\caption{In this case $z=1,(a,b,c,d)=(1,1,1,1)$.}
\label{fig002}
\end{figure}

\begin{figure}[h]
\includegraphics[scale=0.25]{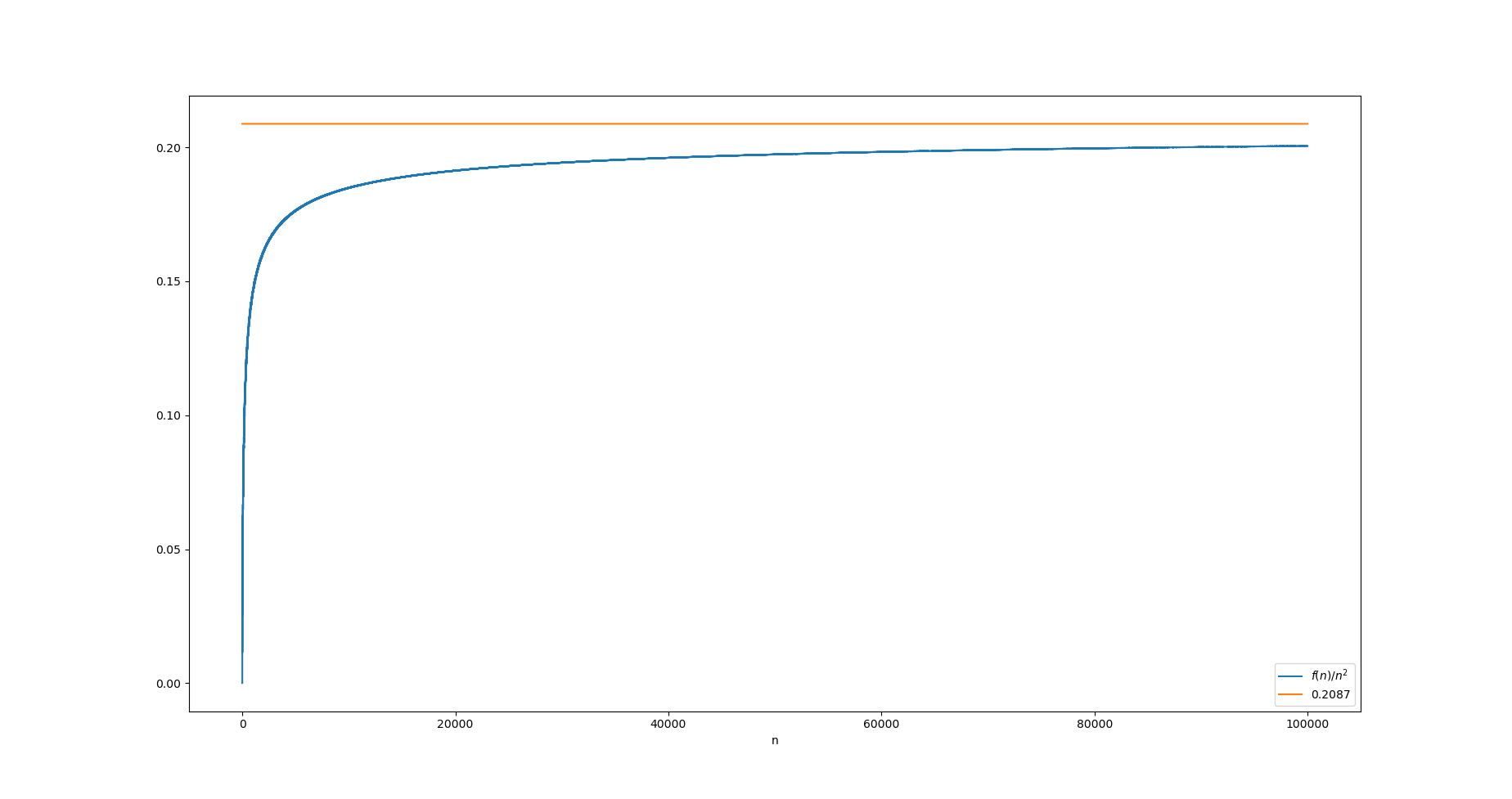}
\caption{On this plot we see that $f(n)/n^2$ converges to $C=0.2087...= \frac{1}{20} \sqrt{3} \cdot\L( \frac{\pi}{3} ) \zeta^{-1}(4) \zeta(\Eis, 2)$}
\end{figure}

\begin{figure}[h]
\includegraphics[scale=0.3]{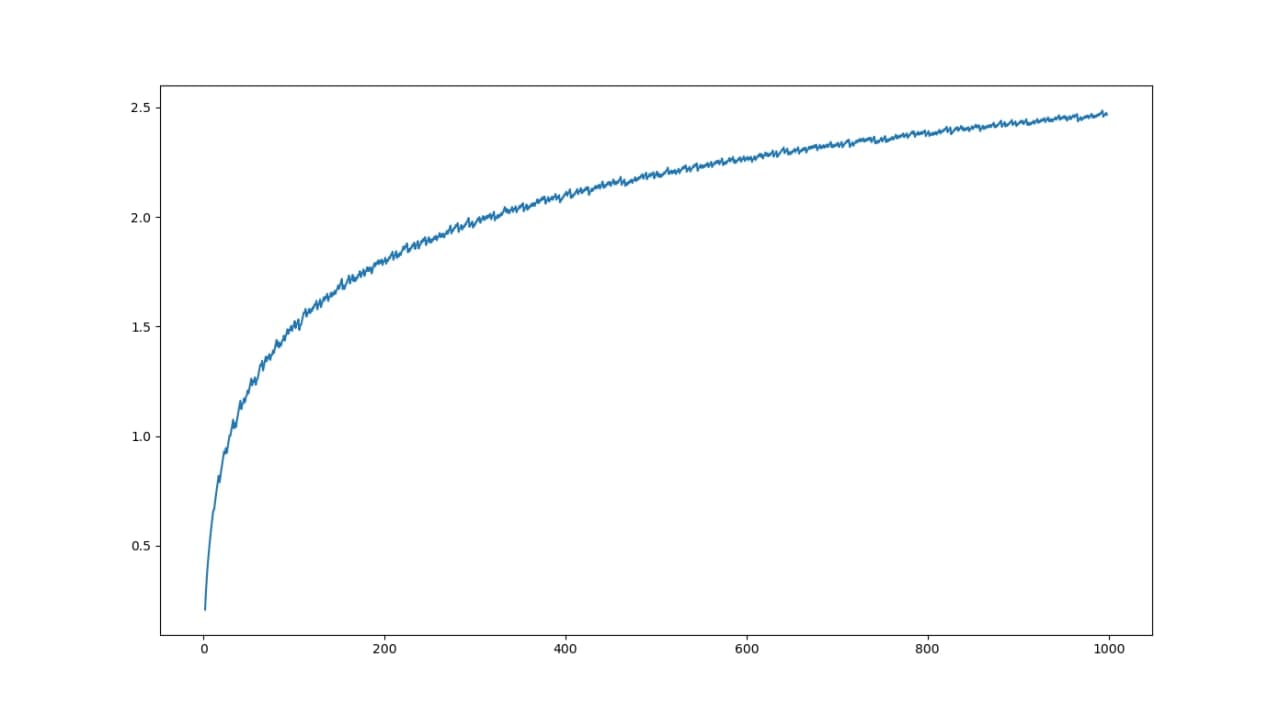}
\caption{The plot for the error term $\frac{1}{20} \sqrt{3}\L( \frac{\pi}{3} ) \zeta^{-1}(4) \zeta(\Eis, 2)n^2-f(n)$ divided by $n^{3/2}$ is presented. Thus we see that $f(n) \approx \frac{1}{20} \sqrt{3} \cdot\L( \frac{\pi}{3} ) \zeta^{-1}(4) \zeta(\Eis, 2)n^2-2.5n^{3/2}$.}
\end{figure}

\newpage
\section{Acknowledgments}

We all are grateful to the Euler Institute for its hospitality and to The Great Mathematical Workshop for the opportunity to start this project. Mikhail Chernavskikh is supported by the Basis Foundation scholarship. Research of Alexander Zakharov is supported by Ministry of Science and Higher Education of the Russian Federation, agreement № 075–15–2019–1619.

The research of Nikita Kalinin is supported by the Russian Science Foundation grant \textnumero 20-71-00007. Theorem 3.1 and Theorem 4.2 have been obtained under support of the RSF grant \textnumero 20-71-00007. We thank the referee for the careful reading of this chapter.

\bibliographystyle{plain}


\end{document}